\newtheorem{definition}{Definition}
\newtheorem{theorem}{Theorem}
\newtheorem{proposition}{Proposition}
\newtheorem{lemma}{Lemma}
\newtheorem{example}{Example}
\DeclareMathOperator*{\ext}{ext}
\begin{document}
\allowdisplaybreaks
\title{Discrete Hamiltonian Variational Integrators}
\author{Melvin Leok}
\address{Department of Mathematics, University of California, San Diego, 9500 Gilman Drive, La Jolla, California 92093--0112.}
\email{mleok@math.ucsd.edu}
\author{Jingjing Zhang}
\address{LSEC, Academy of Mathematics and Systems Science, Chinese Academy of Sciences, Beijing 100190, China.}
\email{zhangjj@lsec.cc.ac.cn}
\date{\today}
\begin{abstract}
 We consider the continuous and discrete-time Hamilton's variational principle on phase space, and characterize the exact discrete Hamiltonian which provides an exact correspondence between discrete and continuous Hamiltonian mechanics. The variational characterization of the exact discrete Hamiltonian naturally leads to a class of generalized Galerkin Hamiltonian variational integrators, which include the symplectic partitioned Runge--Kutta methods. We also characterize the group invariance properties of discrete Hamiltonians which lead to a discrete Noether's theorem.
\end{abstract}

\maketitle

\section{Introduction}
\subsection{Discrete Mechanics}
Discrete-time analogues of Lagrangian and Hamiltonian mechanics, which are derived from discrete variational principles, yield a class of geometric numerical integrators~\cite{HaLuWa2006} referred to as variational integrators~\cite{LaWe2006, MaWe2001}. The discrete variational approach to constructing numerical integrators is of interest as they automatically yield methods that are symplectic, and by a backward error analysis, exhibit bounded energy errors for exponentially long times (see, for example, \cite{Ha1994}). When the discrete Lagrangian or Hamiltonian is group-invariant, they will yield numerical methods that are momentum preserving.

Discrete Hamiltonian mechanics can be derived from discrete Lagrangian mechanics by relaxing the discrete second-order curve condition. The dual formulation of this constrained optimization problem yields discrete Hamiltonian mechanics \cite{LaWe2006}. Alternatively, the second-order curve condition can be imposed using Lagrange multipliers, and this corresponds to the discrete Hamilton--Pontryagin principle~\cite{LeOh2008}. 

In contrast to the prior literature on discrete Hamiltonian mechanics, which typically start from the Lagrangian setting, we will focus on constructing Hamiltonian variational integrators from the Hamiltonian point of view, without recourse to the Lagrangian formulation. When the Hamiltonian is hyperregular, it is possible to obtain the corresponding Lagrangian function, adopt the Galerkin construction of Lagrangian variational integrators to obtain a discrete Lagrangian, and then perform a discrete Legendre transformation to obtain a discrete Hamiltonian. This is described in the following diagram:
\[\xymatrix@!0@R=0.75in@C=1.35in{H(q, p)\ar[r]^{\mathbb{F}H}\ar@{-->}[d] & L(q,\dot{q})\ar[d]\\
H_d^+(q_0,p_1)&L_d(q_0,q_1)\ar[l]^{\mathbb{F}L_d}}\]
The goal of this paper is to directly express the discrete Hamiltonian in terms of the continuous Hamiltonian, so that the diagram above commutes when the Hamiltonian is hyperregular. An added benefit is that such an approach would remain valid even if the Hamiltonian is degenerate, as is the case for point vortices (see \cite{Ne2001}, p.\,22), and no corresponding Lagrangian formulation exists.

The Galerkin construction for Lagrangian variational integrators is attractive, since it provides a general framework for constructing a large class of symplectic methods based on suitable choices of finite-dimensional approximation spaces, and numerical quadrature formulas. Our approach allows one to apply the Galerkin construction of variational integrators to Hamiltonian systems directly, and may potentially generalize to variational integrators for multisymplectic Hamiltonian PDEs \cite{BrRe2001, LeMaOrWe2003, MaPeSh1999}.

Discrete Lagrangian mechanics is expressed in terms of a discrete Lagrangian, which can be viewed as a Type I generating function of a symplectic map, and discrete Hamiltonian mechanics is naturally expressed in terms of discrete Hamiltonians {\cite{LaWe2006}, which are either Type II or III generating functions. The discrete Hamiltonian perspective allows one to avoid some of the technical difficulties associated with the singularity associated with Type I generating functions at time $t=0$ (see \cite{Ma1992}, p.\,177). 

\begin{example}
To illustrate the difficulties associated with degenerate Hamiltonians, consider
\[H(q,p)=qp,\]
with Legendre transformation given by $\mathbb{F}H:T^*Q\rightarrow TQ$, $(q,p)\mapsto(q, \partial H/\partial p)=(q, q)$. Clearly, in this situation, the Legendre transformation is not invertible. Furthermore, the associated Lagrangian is identically zero, i.e., $L(q,\dot q)=\left.p\dot q - H(q,p)\right|_{\dot q = \partial H/\partial p}=\left.p\dot{q}-qp\right|_{\dot{q}=q}\equiv 0$.

The associated Hamilton's equations is given by $\dot{q}=\partial H/\partial p=q$, $\dot{p}=-\partial H/\partial q=-p$, with exact solution $q(t)=q(0) \exp (t)$, $p(t)=p(0)\exp(-t)$. This exact solution is, in general, incompatible with the $(q_0, q_1)$ boundary conditions associated with Type I generating functions, but it is compatible with the $(q_0,p_1)$ boundary conditions associated with Type II generating functions.

In view of this example, our discussion of discrete Hamiltonian mechanics will be expressed directly in terms of continuous Hamiltonians and Type II generating functions.
\end{example}

\subsection{Main Results} We provide a characterization of the Type II generating function that generates the exact flow of Hamilton's equations, and derive the corresponding Type II Hamilton--Jacobi equation that it satisfies. By considering a discrete Type II Hamilton's variational principle in phase space, we derive the discrete Hamilton's equations in terms of a discrete Hamiltonian. We provide a variational characterization of the exact discrete Hamiltonian that, when substituted into the discrete Hamilton's equations, generates samples of the exact continuous solution of Hamilton's equations. Also, we introduce a discrete Type II Hamilton--Jacobi equation.

From the variational characterization of the exact discrete Hamiltonian, we introduce a generalized Galerkin approximation from both the Hamiltonian and Lagrangian sides, and show that they are equivalent when the Hamiltonian is hyperregular. In addition, we provide a systematic means of implementing these methods as symplectic-partitioned Runge--Kutta (SPRK) methods. We also establish the invariance properties of the discrete Hamiltonian that yield a discrete Noether's theorem. Galerkin discrete Hamiltonians derived from group-invariant interpolatory functions satisfy these invariance properties, and therefore preserve momentum.

\subsection{Outline of the Paper} In Section \ref{sec:continuous_mechanics}, we present the Type II analogues of Hamilton's phase space variational principle and the Hamilton--Jacobi equation, and we consider the discrete-time analogues of these in Section \ref{sec:discrete_mechanics}. In Section \ref{sec:galerkin_vi}, we develop generalized Galerkin Hamiltonian and Lagrangian variational integrators, and consider their implementation as symplectic-partitioned Runge--Kutta methods. In Section \ref{sec:momentum_preservation}, we establish a discrete Noether's theorem, and provide a discrete Hamiltonian that preserves momentum.

\section{Variational Formulation of Hamiltonian Mechanics}\label{sec:continuous_mechanics}
\subsection{Hamilton's Variational Principle for Hamiltonian and Lagrangian Mechanics} Considering a $n$-dimensional configuration manifold $Q$ with associated tangent space $TQ$ and phase space $T^*Q.$ We introduce generalized coordinates $q=(q^1, q^2, \ldots, q^n)$ on $Q$ and $(q, p)=(q^1, q^2, \ldots, q^n, p_1, p_2, \ldots, p_n)$ on $T^*Q.$ Given a Hamiltonian $H:T^*Q\rightarrow \mathbb{R}$, Hamilton's phase space variational principle states that
\[\delta \int_0^T [p\dot{q} -H(q,p) ]dt=0,\]
for fixed $q(0)$ and $q(T)$. This is equivalent to Hamilton's canonical equations,
\begin{align}\label{Hameq}
\dot{q}=\frac{\partial H}{\partial p}(q, p), \quad
\dot{p}=-\frac{\partial H}{\partial q}(q, p).
\end{align}
If the Hamiltonian is hyperregular, there is a corresponding Lagrangian $L:TQ\rightarrow \mathbb{R}$, given by
\[ L(q,\dot q)=\ext_p \, p\dot q-H(q,p)=\left.p\dot q - H(q,p)\right|_{\dot q = \partial H/\partial p},\]
where $\ext_p$ denotes the extremum over $p$. Then, 
Hamilton's phase space principle is equivalent to Hamilton's principle,
\[\delta \int_0^T L(q,\dot q) dt =0,\]
for fixed $q(0)$ and $q(T)$. The exact discrete Lagrangian is then given by,
\[ L_d^\text{exact}(q_0, q_1) = \ext_{\substack{q\in C^2([0,h],Q) \\ q(0)=q_0, q(h)=q_1}} \int_0^h L(q(t), \dot q(t)) dt =  \ext_{\substack{(q,p)\in C^2([0,h],T^*Q) \\ q(0)=q_0, q(h)=q_1}} \int_0^h p\dot{q} -H(q,p) dt,\]
which correspond to Jacobi's solution of the Hamilton--Jacobi equation. The usual characterization of the exact discrete Lagrangian involves evaluating the action integral on a curve $q$ that satisfies the boundary conditions at the endpoints, and the Euler--Lagrange equations in the interior, however, as we will see, the variational characterization above naturally leads to the construction of Galerkin variational integrators.

\subsection{Type II Hamilton's Variational Principle in Phase Space} The boundary conditions associated with both Hamilton's and Hamilton's phase space variational principle are naturally related to Type I generating functions, since they specify the positions at the initial and final times. We will introduce a version of Hamilton's phase space principle for fixed $q(0)$, $p(T)$ boundary conditions, that correspond to a Type II generating function, which we refer to as the Type II Hamilton's variational principle in phase space. As would be expected, this will give a characterization of the exact discrete Hamiltonian. Taking the Legendre transformation of the Jacobi solution of the Hamilton--Jacobi equation leads us to consider the following functional, $\mathfrak{S}:C^2([0,T],T^*Q)\rightarrow \mathbb{R}$,
\begin{align}\label{Htype2bv}
\mathfrak{S}(q(\cdot),p(\cdot))=p(T) q(T) - \int_0^T [p \dot{q}-H(q(t),
p(t))] dt.
\end{align}

\begin{lemma}\label{lemma1}
Consider the action functional $\mathfrak{S}(q (\cdot), p(\cdot))$ given by \eqref{Htype2bv}. The condition that $\mathfrak{S}(q (\cdot), p(\cdot))$ is stationary with respect to boundary conditions $\delta q(0)=0$, $\delta p(T)=0$  is equivalent to $(q (\cdot), p(\cdot))$ satisfying
Hamilton's canonical equations \eqref{Hameq}.
\end{lemma}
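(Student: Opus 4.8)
The plan is to compute the first variation of $\mathfrak{S}$ directly and read off Hamilton's equations as the vanishing of the bulk terms, after checking that the boundary terms are killed by the stated boundary conditions. First I would write $\mathfrak{S}(q(\cdot),p(\cdot)) = p(T)q(T) - \int_0^T [p\dot q - H(q,p)]\,dt$ and take an arbitrary variation $(\delta q(\cdot), \delta p(\cdot))$ of the curve, with the constraints $\delta q(0) = 0$ and $\delta p(T) = 0$ (but $\delta q(T)$ and $\delta p(0)$ free). Differentiating the boundary term gives $\delta p(T)\,q(T) + p(T)\,\delta q(T) = p(T)\,\delta q(T)$, using $\delta p(T)=0$.

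Next I would vary the integral: $\delta \int_0^T [p\dot q - H]\,dt = \int_0^T \left[ \delta p\, \dot q + p\, \delta \dot q - \tfrac{\partial H}{\partial q}\delta q - \tfrac{\partial H}{\partial p}\delta p \right] dt$. The key manipulation is to integrate the $p\,\delta\dot q$ term by parts, $\int_0^T p\,\delta\dot q\,dt = \left[p\,\delta q\right]_0^T - \int_0^T \dot p\,\delta q\,dt = p(T)\delta q(T) - \int_0^T \dot p\, \delta q\, dt$, where the contribution at $t=0$ drops out because $\delta q(0)=0$. Assembling everything, $\delta \mathfrak{S} = p(T)\delta q(T) - p(T)\delta q(T) + \int_0^T \left[ \left(\dot q - \tfrac{\partial H}{\partial p}\right)\delta p - \left(\dot p + \tfrac{\partial H}{\partial q}\right)\delta q \right] dt$, so the two $p(T)\delta q(T)$ boundary terms cancel exactly and only the integral remains.

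Finally, since $\delta q(\cdot)$ and $\delta p(\cdot)$ are otherwise arbitrary on $(0,T)$ (and $\delta q(T)$, $\delta p(0)$ are free but no longer appear), the fundamental lemma of the calculus of variations forces the integrand coefficients to vanish pointwise, giving $\dot q = \partial H/\partial p$ and $\dot p = -\partial H/\partial q$, which are precisely Hamilton's canonical equations \eqref{Hameq}; conversely, if these hold the integrand vanishes and $\delta \mathfrak{S}=0$. There is no serious obstacle here — the only point requiring care is bookkeeping the boundary terms, namely verifying that the $p(T)\delta q(T)$ term generated by differentiating the $p(T)q(T)$ prefactor is exactly what is needed to absorb the endpoint contribution from integrating $p\,\delta\dot q$ by parts at $t=T$, while $\delta q(0)=0$ disposes of the $t=0$ endpoint. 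This cancellation is the whole reason the $p(T)q(T)$ term is prepended to the action, and it is what makes the $(q(0),p(T))$ boundary data the natural ones for this functional.
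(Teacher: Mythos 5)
Your proposal is correct and follows essentially the same route as the paper: vary the functional, integrate the $p\,\delta\dot q$ term by parts, observe that the $t=T$ boundary contribution cancels against the variation of the $p(T)q(T)$ prefactor while $\delta q(0)=0$ kills the $t=0$ contribution, and then apply the fundamental lemma of the calculus of variations in both directions. (The only quibble is an immaterial overall sign on the remaining integral, which does not affect the stationarity argument.)
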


\begin{proof}
Direct computation of the variation of $\mathfrak{S}$ over the path space $C^2([0,T],T^*Q)$ yields,
\begin{align}
\nonumber \delta \mathfrak{S}&=q(T)\delta p(T) + p(T) \delta q(T)\\
\nonumber&\qquad-\int_0^T \left[\dot{q}(t)\delta p(t) + p(t)\delta \dot{q}(t) - \frac{\partial H}{\partial q}\left(q(t), p(t)\right)\delta q(t) -\frac{\partial H}{\partial p}\left(q(t),
p(t)\right)\delta p(t) \right] dt.
\intertext{By using integration by  parts and  the boundary conditions $\delta
q(0)=0, \delta p(T)=0,$ we obtain,}
\label{H2extre}
\delta \mathfrak{S}&=q(T)\delta p(T) + p(T) \delta q(T)-p(T) \delta q(T)+ p(0) \delta q(0)\\
\nonumber&\qquad+\int_0^T \left[
\left(\dot{p}(t)+\frac{\partial H}{\partial q}(q(t),
 p(t))\right)\delta q(t) -\left(\dot{q}(t)-\frac{\partial H}{\partial p}(q(t),
 p(t))\right)\delta p(t)\right]dt\\
 \nonumber &=\int_0^T  \left[ \left(\dot{p}(t)+\frac{\partial H}{\partial q}(q(t), p(t))\right)\delta q(t)-\left(\dot{q}(t)-\frac{\partial H}{\partial p}(q(t), p(t))\right)\delta p(t)\right] dt.
\intertext{If $(q, p) $ satisfies Hamilton's equations
\eqref{Hameq}, the integrand vanishes, and $\delta \mathfrak{S}=0.$
Conversely, if we assume that $ \delta \mathfrak{S}=0$ for any
$\delta q(0)=0, \delta p(T)=0,$ then from \eqref{H2extre}, we
obtain} \nonumber\delta \mathfrak{S}&=\int_0^T
\left[\left(\dot{p}(t)+\frac{\partial H}{\partial q}(q(t),
 p(t))\right)\delta q(t)-\left(\dot{q}(t)-\frac{\partial H}{\partial p}(q(t),
 p(t))\right)\delta p(t)\right]dt=0,
 \end{align}
and by the fundamental theorem of calculus of variations \cite{Ar1989}, we
recover Hamilton's equations,
\[\dot{q}(t)=\frac{\partial H}{\partial p}\left(q(t),
 p(t)\right),\qquad
 \dot{p}(t)=-\frac{\partial H}{\partial q}\left(q(t),
 p(t)\right).\]
\end{proof}
The above lemma states that the integral curve $(q(\cdot),p(\cdot))$ of Hamilton's equations extremizes the action functional $\mathfrak{S}(q(\cdot),p(\cdot))$ \eqref{Htype2bv} for fixed boundary conditions
$q(0)$, $p(T)$. We now introduce the function $\mathcal{S}(q_0, p_T)$, which is given by the extremal value of the action functional $\mathfrak{S}$ over the family of curves satisfying the boundary conditions $q(0)=q_0$, $p(T)=p_T$,
\begin{align}\label{2map}
\mathcal{S}(q_0, p_T)&= \ext_{\substack{(q, p) \in
C^2([0, T],T^*Q)\\q(0)=q_0, p(T)=p_T}} \mathfrak{S}(q(\cdot),p(\cdot)) =  \ext_{\substack{(q, p) \in
C^2([0, T],T^*Q)\\q(0)=q_0, p(T)=p_T}} p_T q_T - \int_0^T \left[ p \dot{q}-H(q, p) \right]
dt.
\end{align}
The next theorem describes how $\mathcal{S}(q_0, p_T)$ generates the flow of Hamilton's equations.
\begin{theorem}\label{thm1}
Given the  function $\mathcal{S}(q_0, p_T)$ defined by \eqref{2map},
the exact time-$T$ flow map of Hamilton's equations
$(q_0, p_0) \mapsto (q_T, p_T)$ is implicitly given by the following relation,
\begin{align}\label{defp0q1}
q_T=D_{2}\mathcal{S}(q_0, p_T), \qquad p_0=D_{1} \mathcal{S}(q_0, p_T).
\end{align}
In particular, $\mathcal{S}(q_0, p_T)$ is a Type II generating function that generates the exact flow of Hamilton's equations.
\end{theorem}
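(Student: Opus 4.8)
The plan is to obtain the two relations in \eqref{defp0q1} from an envelope (first-variation) argument, reusing the variational identity already computed in the proof of Lemma \ref{lemma1}. Recall that, \emph{before} imposing the boundary conditions $\delta q(0)=0$, $\delta p(T)=0$, equation \eqref{H2extre} states
\[\delta\mathfrak{S}=q(T)\,\delta p(T)+p(0)\,\delta q(0)+\int_0^T\left[\left(\dot p+\tfrac{\partial H}{\partial q}\right)\delta q-\left(\dot q-\tfrac{\partial H}{\partial p}\right)\delta p\right]dt.\]
Let $(q(\cdot;q_0,p_T),p(\cdot;q_0,p_T))$ denote the extremal curve realizing $\mathcal{S}(q_0,p_T)$, so that $\mathcal{S}(q_0,p_T)=\mathfrak{S}(q(\cdot;q_0,p_T),p(\cdot;q_0,p_T))$. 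I would then differentiate this identity with respect to the boundary data $q_0$ and $p_T$. By Lemma \ref{lemma1} the extremal curve satisfies Hamilton's equations \eqref{Hameq} in the interior, so the integrand above vanishes identically; equivalently, since the curve is a critical point of $\mathfrak{S}$ among curves with those endpoints, the contribution of the variation of the curve itself drops out. What remains is the explicit boundary dependence, giving $d\mathcal{S}=q_T\,dp_T+p_0\,dq_0$, i.e.\ $D_1\mathcal{S}(q_0,p_T)=p_0$ and $D_2\mathcal{S}(q_0,p_T)=q_T$, which is precisely \eqref{defp0q1}.

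From here the two assertions of the theorem follow quickly. First, by construction the extremal curve $(q(\cdot),p(\cdot))$ solves \eqref{Hameq} with $q(0)=q_0$ and $p(T)=p_T$, and we have set $p_0:=p(0)$, $q_T:=q(T)$; hence the pair $(q_T,p_0)$ obtained from \eqref{defp0q1} is exactly the image of $(q_0,p_0)$ under the time-$T$ Hamiltonian flow. Second, the relation $d\mathcal{S}=p_0\,dq_0+q_T\,dp_T$ is the defining property of a Type II generating function, and a standard computation (taking $d$ of both sides, or comparing $dq_0\wedge dp_0$ with $dq_T\wedge dp_T$) shows that any map admitting such a generating function is symplectic, consistent with the symplecticity of the Hamiltonian flow.

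The step I expect to be the main obstacle is making the envelope argument rigorous, namely justifying that one may differentiate $\mathcal{S}(q_0,p_T)$ through the $\ext$ and that the $(q_0,p_T)$-derivative of the extremizing curve contributes nothing. This requires (i) existence and uniqueness of the extremal curve subject to the mixed boundary conditions $q(0)=q_0$, $p(T)=p_T$, which holds for $T$ sufficiently small by the local well-posedness of the Hamiltonian boundary value problem (equivalently, this is the Legendre transform of Jacobi's solution of the Hamilton--Jacobi equation near the diagonal), and (ii) smooth dependence of that extremizer on $(q_0,p_T)$ so that the chain rule applies. Granting this regularity, the critical-point property of the curve annihilates the interior variation and only the boundary terms $q_T\,\delta p_T+p_0\,\delta q_0$ survive. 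I would therefore first record the small-$T$ solvability and smooth dependence of the boundary value problem, then carry out the first-variation computation above, and finally observe that \eqref{defp0q1}, combined with the fact that the extremal curve integrates \eqref{Hameq}, identifies the generated map with the exact time-$T$ flow.
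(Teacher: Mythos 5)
Your proposal is correct and follows essentially the same route as the paper: the paper's proof of Theorem \ref{thm1} is precisely the envelope computation you describe, differentiating $\mathcal{S}$ through the extremum, integrating by parts, and using Lemma \ref{lemma1} to annihilate the interior integrand so that only the boundary terms $p_0\,dq_0+q_T\,dp_T$ survive. Your additional remarks on small-$T$ solvability and smooth dependence of the extremal on $(q_0,p_T)$ address a regularity point the paper leaves implicit, but they do not change the substance of the argument.
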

\begin{proof}We directly compute
\begin{align*}
\frac{\partial \mathcal{S}}{\partial q_0}(q_0, p_T)&=\frac{\partial  q_T}{\partial q_0} p_T-\int_0^T
\left[\frac{\partial p(t)}{\partial q_0} \dot{q}(t)+\frac{\partial \dot{q}(t)}{\partial q_0}p(t)-\frac{\partial q(t)}{\partial q_0}\frac{\partial H}{\partial q}(q, p)-\frac{\partial p(t)}{\partial q_0}\frac{\partial H}{\partial p}(q, p) \right]dt \\
&=\frac{\partial q_T}{\partial q_0} p_T-\frac{\partial
q_T}{\partial q_0} p_T+\frac{\partial q_0}{\partial q_0}
p_0-\int_0^T \left[\frac{\partial p(t)}{\partial
q_0}\left(\dot{q}-\frac{\partial H}{\partial p}(q, p)\right)
-\frac{\partial q(t)}{\partial q_0}\left(\dot{p}+\frac{\partial H}{\partial q}(q, p)\right)\right]dt \\
&=p_0-\int_0^T \left[\frac{\partial p(t)}{\partial
q_0}\left(\dot{q}-\frac{\partial H}{\partial p}(q, p)\right) -\frac{\partial q(t)}{\partial
q_0}\left(\dot{p}+\frac{\partial H}{\partial q}(q, p)\right)\right] dt,
\intertext{where we used integration by parts. By Lemma \ref{lemma1}, the extremum of $\mathfrak{S}$ is achieved when the curve $(q,p)$ satisfies Hamilton's equations. Consequently, the integrand in the above equation vanishes, giving $p_0=\frac{\partial \mathcal{S}}{\partial q_0}(q_0, p_T).$
Similarly, by using integration by parts, and restricting ourselves to curves $(q,p)$ which satisfy Hamilton's equations, we obtain}
\frac{\partial \mathcal{S}}{\partial p_T}(q_0, p_T) &=\frac{\partial  p_T}{\partial p_T} q_T+  \frac{\partial q_T}{\partial p_T}p_T-\int_0^T \left[\frac{\partial p(t)}{\partial
p_T} \dot{q}(t) +\frac{\partial \dot{q}(t)}{\partial p_T}p(t)-\frac{\partial q(t)}{\partial p_T}\frac{\partial H}{\partial q}(q, p)
-\frac{\partial p(t)}{\partial p_T}\frac{\partial H}{\partial p}(q, p) \right]dt \\
&=q_T+\frac{\partial q_T}{\partial p_T}p_T -\frac{\partial
q_T}{\partial p_T}p_T +\frac{\partial q_0}{\partial p_T}p_0
-\int_0^T \left[ \frac{\partial p(t)}{\partial p_T}
\left(\dot{q}-\frac{\partial H}{\partial p}(q, p)\right)-\frac{\partial
q(t)}{\partial p_T}\left(\dot{p}+\frac{\partial H}{\partial q}(q, p)\right) \right]dt \\
&=q_T.
\end{align*}
\end{proof}

\subsection{Type II Hamilton--Jacobi Equation} Let us explicitly consider $\mathcal{S}(q_0, p_T)$ as a function of the time $T$, which we denote by $\mathcal{S}_T(q_0, p_T)$. Theorem \ref{thm1} states that the Type II generating function $\mathcal{S}_T(q_0, p_T)$ generates the exact time-$T$ flow map of Hamilton's equations, and consequently it has to be related by the Legendre transformation to the Jacobi solution of the Hamilton--Jacobi equation, which is the Type I generating function for the same flow map.
Consequently, we expect that the function $\mathcal{S}_T(q_0, p_T)$ satisfies a Type II analogue of the Hamilton--Jacobi equation, which we derive in the following proposition.
\begin{proposition}
Let 
\begin{align}\label{type2sol}
S_2(q_0, p, t)\equiv \mathcal{S}_t(q_0, p)=\ext_{\substack{(q, p) \in
C^2([0, t],T^*Q)\\q(0)=q_0, p(t)=p}}\left (p(t)  q(t) - \int_0^t \left[ p(s) \dot{q}(s)-H(q(s), p(s)) \right] ds\right).
\end{align}
Then, the function $S_2(q_0, p, t)$ satisfies the \textbf{Type II Hamilton--Jacobi equation},
\begin{align} \label{type2hj}
\frac{\partial S_2( q_0, p, t)}{\partial t} =H\left(\frac{\partial S_2}{\partial p},p\right).
\end{align}
\end{proposition}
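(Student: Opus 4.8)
The plan is to differentiate $S_2(q_0,p,t)$ with respect to the terminal time $t$ directly, keeping careful track of how the extremal curve depends on $t$, and then to invoke Theorem \ref{thm1} to rewrite the outcome. Write the extremizing curve as $(q(s,t),p(s,t))$ for $s\in[0,t]$, normalized so that $q(0,t)=q_0$ and $p(t,t)=p$ for every $t$; by Lemma \ref{lemma1} this curve satisfies Hamilton's equations $\partial_s q=\partial H/\partial p$, $\partial_s p=-\partial H/\partial q$ in the interior. Since $p(t,t)=p$ is constant, we have $S_2(q_0,p,t)=p\,q(t,t)-\int_0^t\bigl[p(s,t)\,\partial_s q(s,t)-H(q(s,t),p(s,t))\bigr]\,ds$.

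First I would differentiate under the integral sign and use the fundamental theorem of calculus for the upper limit. This produces a boundary contribution $-\bigl[p\,\partial_s q(t,t)-H(q(t,t),p)\bigr]$ from the limit of integration, the ``diagonal'' term $p\,\tfrac{d}{dt}q(t,t)=p\,\partial_s q(t,t)+p\,\partial_t q(t,t)$ from the endpoint factor, and an integral over $[0,t]$ of $\partial_t$ of the interior integrand. Next I would integrate by parts in $s$ the term $p\,\partial_s\partial_t q$ occurring in that integral: the boundary term at $s=0$ vanishes because $q(0,t)=q_0$ does not depend on $t$, and the boundary term at $s=t$ is precisely $-p\,\partial_t q(t,t)$, which cancels the diagonal contribution. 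After this integration by parts, the surviving integrand is a linear combination of $(\partial_s q-\partial H/\partial p)$ and $(\partial_s p+\partial H/\partial q)$ with coefficients $\partial_t p$ and $\partial_t q$; along the extremal these factors vanish by Hamilton's equations, so the integral drops out entirely, leaving $\partial S_2/\partial t=H(q(t,t),p)$.

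Finally, applying Theorem \ref{thm1} with $T=t$ identifies the terminal position of the extremal curve as $q(t,t)=q_t=D_2\mathcal{S}(q_0,p)=\partial S_2/\partial p\,(q_0,p,t)$. Substituting this into $\partial S_2/\partial t=H(q(t,t),p)$ gives the Type II Hamilton--Jacobi equation \eqref{type2hj}.

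I expect the main obstacle to be purely bookkeeping rather than conceptual: correctly splitting the total derivative $\tfrac{d}{dt}q(t,t)$ into its $s$-flow part $\partial_s q(t,t)$ and its parameter part $\partial_t q(t,t)$, and keeping the upper-limit boundary term from the fundamental theorem of calculus distinct from the $s=t$ boundary term generated by the integration by parts, so that the cancellations are transparent. One should also record that $H$ carries no explicit time dependence, so $\partial_t$ of $H(q(s,t),p(s,t))$ is taken solely through its arguments, and, as elsewhere in the paper, the argument presumes that the extremal $(q(\cdot,t),p(\cdot,t))$ depends smoothly enough on $t$ to justify differentiating under the integral sign. An essentially equivalent but notationally heavier alternative is to reparametrize every admissible curve to the fixed domain $[0,1]$ and apply an envelope-type argument; the direct computation above, however, parallels the style of the proof of Theorem \ref{thm1}.
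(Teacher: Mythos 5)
Your argument is correct, and it reaches the same two essential ingredients as the paper's proof --- the identity $\partial S_2/\partial t = H(q(t),p)$ followed by the substitution $q(t)=\partial S_2/\partial p$ from Theorem \ref{thm1} --- but it gets there by a noticeably different computation. You compute the partial derivative $\partial S_2/\partial t$ directly at fixed $(q_0,p)$, which forces you to track the two-parameter family $(q(s,t),p(s,t))$ of extremals, differentiate under the integral sign, and integrate by parts so that Hamilton's equations annihilate the interior integral and the $p\,\partial_t q(t,t)$ boundary term cancels the diagonal contribution. The paper instead evaluates $S_2$ along a single fixed trajectory of Hamilton's equations, so that the point $(q_0,p(t),t)$ moves with the flow: the total derivative is then simply
\begin{align*}
\frac{dS_2}{dt}=\dot{p}(t)q(t)+p(t)\dot{q}(t)-\bigl[p(t)\dot{q}(t)-H(q(t),p(t))\bigr]=\dot{p}(t)q(t)+H(q(t),p(t)),
\end{align*}
with no $\partial_t$ of the interior integrand to contend with, and the chain rule $\frac{dS_2}{dt}=\dot{p}(t)\,\partial S_2/\partial p+\partial S_2/\partial t$ together with $q(t)=\partial S_2/\partial p$ isolates $\partial S_2/\partial t$. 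The paper's route buys brevity by exploiting that the extremal for the interval $[0,t]$ with data $(q_0,p(t))$ is just the restriction of one fixed solution curve, so the family never needs to be differentiated in $t$; your route buys a genuinely self-contained computation of the partial derivative that does not pass through the total derivative along the flow, at the cost of the bookkeeping you yourself flag (splitting $\frac{d}{dt}q(t,t)$, the smooth dependence of the extremal family on $t$). Both are valid; yours is closer in spirit to the proof of Theorem \ref{thm1}, the paper's to the classical derivation of the Type I Hamilton--Jacobi equation from Jacobi's solution.
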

\begin{proof}
From the definition of $S_2(q_0, p, t),$ the curve that extremizes the functional connects the fixed initial point $(q_0, p_0)$ with the arbitrary final point $(q,p)$ at time $t$. Computing the time derivative of $S_2(q_0, p, t)$ yields,
\begin{align}\label{Sleft}
\frac{d S_2}{d t}=\dot{p}(t) q(t) + p(t) \dot{q}(t)-p(t)\dot{q}(t)+H(q(t), p(t)).
\end{align}
On the other hand,
\begin{align}\label{Sright}
\frac{d S_2}{d t}= \dot{p}(t)\frac{\partial S_2}{\partial p}+\frac{\partial S_2}{\partial t}  .
\end{align}
Equating \eqref{Sleft} and \eqref{Sright}, and applying \eqref{defp0q1} yields
\begin{align}
\frac{\partial S_2}{\partial t}&=\dot{p}(t) q(t) +H(q(t), p(t))-\dot{p}(t)\frac{\partial S_2}{\partial p}=H(q(t), p(t))=H\left( \frac{\partial S_2}{\partial p}, p\right).
\end{align}
\end{proof}
The Type II Hamilton--Jacobi equation also appears on p.\,201 of \cite{HaLuWa2006} and in \cite{LeDiMe2007}. However, this equation has generally been used in the construction of symplectic integrators based on Type II generating functions by considering a series expansion of $S_2$ in powers of $t$, substituting the series in the Type II Hamilton--Jacobi equation, and truncating. Then, a term-by-term comparison allows one to determine the coefficients in the series expansion of $S_2$, from which one constructs a symplectic map that approximates the exact flow map \cite{ChSc1990, Fe1986, Ru1983, Vo1956}.

However, approximating Jacobi's solution on the Lagrangian side, or the exact discrete right Hamiltonian $\mathcal{S}(q_0, p_T)$ in \eqref{2map}, in terms of their variational characterization provides an elegant method for constructing symplectic integrators. In particular, this naturally leads to the generalized Galerkin framework for constructing discrete Lagrangians and discrete Hamiltonians, which we will explore in the rest of the paper.

In Section \ref{sec:discrete_mechanics}, we will also present a discrete analogue of the Type II Hamilton--Jacobi equation, which can be viewed as a composition theorem that expresses the discrete Hamiltonian for a given time interval in terms of discrete Hamiltonians for the subintervals. This can be viewed as the Type II analogue of the discrete Hamilton--Jacobi equation that was introduced in \cite{ElSc1996}.

\section{Discrete Variational Hamiltonian Mechanics}\label{sec:discrete_mechanics}
\subsection{Discrete Type II Hamilton's Variational Principle in Phase Space}
The Lagrangian formulation of discrete variational mechanics is based on a discretization of Hamilton's principle, and a comprehensive review of this approach is given in \cite{MaWe2001}. The Hamiltonian analogue of discrete variational mechanics was introduced in \cite{LaWe2006}, wherein discrete Lagrangian mechanics was viewed as the primal formulation of a constrained discrete optimization problem, where the constraints are given by the discrete analogue of the second-order curve condition, and dual formulation of this yields discrete Hamiltonian variational mechanics. An analogous approach is based on the discrete Hamilton--Pontryagin variational principle~\cite{LeOh2008}, in which the discrete Hamilton's principle is augmented with a Lagrange multiplier term that enforces the discrete second-order curve condition.

We begin by introducing a partition of the time interval $[0,T]$ with the discrete times $0=t_0<t_1<\ldots<t_N=T$, and a discrete curve in $T^*Q$, denoted by $\{(q_k, p_k)\}_{k=0}^N$, where $q_k\approx q(t_k)$, and $p_k\approx p(t_k)$. Our discrete variational principle will be formulated in terms of a discrete Hamiltonian $H_d^+(q_k, p_{k+1})$, which is an approximation of the Type II generating function given in \eqref{2map},
\begin{align}\label{dishd}H_d^+(q_k, p_{k+1})\approx  \ext_{\substack{(q, p) \in
C^2([t_k,t_{k+1}],T^*Q)\\q(t_k)=q_k, p(t_{k+1})=p_{k+1}}} p(t_{k+1}) q (t_{k+1}) - \int_{t_k}^{t_{k+1}} \left[ p \dot{q}-H(q, p) \right]
dt.
\end{align}

As we saw in Section \ref{sec:continuous_mechanics}, the curve in phase space with fixed boundary conditions $(q_0,p_T)$ that extremizes the functional \eqref{Htype2bv},
\[
\mathfrak{S}(q(\cdot),p(\cdot))=p(T) q(T) - \int_0^T [p \dot{q}-H(q(t),
p(t))] dt,
\]
satisfies Hamilton's canonical equations. Consequently, we can formulate discrete variational Hamiltonian mechanics in terms of a discrete analogue of this functional, which is given by,
\begin{align}\label{disHtype2bv}
\mathfrak{S}_d(\{(q_k, p_k)\}_{k=0}^N)&=p_N q_N -\sum_{k=0}^{N-1}
\int_{t_k}^{t_{k+1}}\left[ p \dot{q}-H(q(t), p(t))\right] dt \\
\nonumber
 &=p_N q_N -\sum_{k=0}^{N-1}\left[p_{k+1} q_{k+1} -H_d^+(q_k,
 p_{k+1})\right].
\end{align}
Then, the \textit{Type II discrete Hamilton's phase space variational principle} states that $\delta \mathfrak{S}_d(\{(q_k, p_k)\}_{k=0}^N)=0$ for discrete curves in $T^*Q$ with fixed $(q_0, p_N)$ boundary conditions.
\begin{lemma}\label{lemma:discrete_hameq}
The Type II discrete Hamilton's phase space variational principle is equivalent to the discrete right
Hamilton's equations
\begin{equation}\label{dish2}
\begin{alignedat}{2} 
  q_{k}&=D_{2} H_d^+(q_{k-1}, p_{k}),&\qquad k&=1, \ldots, 
N-1, \\
p_k&= D_{1} H_d^+(q_k, p_{k+1}),&\qquad k&=1, \ldots, N-1,
\end{alignedat}
\end{equation}
where $H_d^+(q_k, p_{k+1})$ is defined in \eqref{dishd}.
\end{lemma}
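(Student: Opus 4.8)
The plan is to carry the continuous argument of Lemma~\ref{lemma1} over to the discrete setting, where the only real work is index bookkeeping: the analytic ingredient (the fundamental lemma of the calculus of variations) is replaced by the trivial fact that the interior variations $\delta q_k$, $\delta p_k$ form a finite collection of free parameters. First I would simplify the discrete action \eqref{disHtype2bv}. Shifting the index in $\sum_{k=0}^{N-1} p_{k+1}q_{k+1} = \sum_{j=1}^{N} p_j q_j$ and cancelling the $j=N$ term against the leading $p_N q_N$ gives
\[
\mathfrak{S}_d(\{(q_k,p_k)\}_{k=0}^N) = -\sum_{j=1}^{N-1} p_j q_j + \sum_{k=0}^{N-1} H_d^+(q_k, p_{k+1}),
\]
so that $p_0$ and $q_N$ do not appear, consistent with their being outputs of the discrete flow.

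Next I would take the variation, treating $\{q_k\}_{k=0}^N$ and $\{p_k\}_{k=0}^N$ as independent and imposing the boundary conditions $\delta q_0 = 0$, $\delta p_N = 0$. The term $H_d^+(q_k, p_{k+1})$ contributes $D_1 H_d^+(q_k, p_{k+1})\,\delta q_k$ (nontrivial only for $k\geq 1$ since $\delta q_0=0$) and $D_2 H_d^+(q_k, p_{k+1})\,\delta p_{k+1}$; re-indexing the latter by $j=k+1$ turns it into $D_2 H_d^+(q_{j-1}, p_j)\,\delta p_j$, nontrivial only for $j\leq N-1$ since $\delta p_N=0$. Collecting these against the variations of $-p_j q_j$ yields
\[
\delta \mathfrak{S}_d = \sum_{j=1}^{N-1}\Bigl[\bigl(D_1 H_d^+(q_j, p_{j+1}) - p_j\bigr)\delta q_j + \bigl(D_2 H_d^+(q_{j-1}, p_j) - q_j\bigr)\delta p_j\Bigr].
\]

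Finally, because $\delta q_j$ and $\delta p_j$ for $j=1,\ldots,N-1$ range independently over arbitrary vectors, $\delta\mathfrak{S}_d = 0$ for all admissible variations if and only if every bracketed coefficient vanishes, which is exactly \eqref{dish2}; the ``if'' direction reads off the displayed formula directly, and the ``only if'' direction is the discrete fundamental lemma, which here is just linear independence. I expect the main obstacle to be purely organizational --- getting the telescoping and the two index shifts to line up so that the interior equations emerge with arguments $(q_j, p_{j+1})$ and $(q_{j-1}, p_j)$ as stated --- together with a remark that the fixed data $q_0$, $p_N$ and the equations \eqref{dish2} determine the remaining $q_N$, $p_0$, so that the variational principle is well posed.
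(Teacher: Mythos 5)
Your proposal is correct and follows essentially the same route as the paper's proof: rewrite $\mathfrak{S}_d$ so the boundary term $p_Nq_N$ cancels, vary with $\delta q_0=\delta p_N=0$, reindex the $D_2H_d^+$ terms (the discrete integration by parts), and conclude both directions from the independence of the interior variations. The final display you obtain agrees with the paper's equation \eqref{disH2extre} up to an overall sign.
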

\begin{proof}We compute the variation of $\mathfrak{S}_d$,
\begin{align*}
\delta \mathfrak{S}_d& =\delta\left( p_N q_N
-\sum_{k=0}^{N-1}(p_{k+1} q_{k+1} -H_d^+(q_k,
 p_{k+1}))\right) \\ \nonumber &=\delta
\left(-\sum_{k=0}^{N-2}p_{k+1} q_{k+1}+\sum_{k=0}^{N-1} H_{d}^+(q_k,
p_{k+1})\right) \\ \nonumber &= -\sum_{k=0}^{N-2}(q_{k+1} \delta
p_{k+1} +p_{k+1}\delta q_{k+1} )+\sum_{k=0}^{N-1} \left(D_{1}
H_{d}^+(q_k,
p_{k+1}) \delta q_k+ D_{2} H_{d}^+(q_k, p_{k+1})\delta p_{k+1}\right) \\
\nonumber &= -\sum_{k=1}^{N-1}(q_{k} \delta p_k + p_k\delta q_{k}
)+\sum_{k=1}^{N-1} D_{1} H_{d}^+(q_k, p_{k+1})
\delta q_k + D_{1} H_{d}^+(q_0, p_{1})\delta q_0  \\
\nonumber &\qquad+ \sum_{k=1}^{N-1} D_{2} H_{d}^+(q_{k-1},
p_{k})\delta p_{k}+D_{2} H_{d}^+(q_{N-1}, p_{N})\delta p_N
\\ \nonumber &= -\sum_{k=1}^{N-1}\left(q_{k} -D_{2}
H_{d}^+(q_{k-1}, p_{k})\right)\delta p_{k}-\sum_{k=1}^{N-1}
\left(p_k-D_{1} H_{d}^+(q_k, p_{k+1})\right) \delta q_k \\
\nonumber & \qquad+ D_{1} H_{d}^+(q_0, p_{1})\delta q_0 +
D_{2} H_{d}^+(q_{N-1}, p_{N}) \delta p_N.
\end{align*}
where we reindexed the sum, which is the discrete analogue of integration by parts. Using the fact that $(q_0,p_N)$ are fixed, which implies $\delta q_0=0$, $\delta p_N=0$, the above
equation reduces to
\begin{align}\label{disH2extre}
\delta \mathfrak{S}_d=-\sum_{k=1}^{N-1}\left(q_{k}
-D_{2} H_{d}^+(q_{k-1}, p_{k})\right)\delta p_{k}-\sum_{k=1}^{N-1}
\left(p_k-D_{1} H_{d}^+(q_k, p_{k+1})\right) \delta q_k.
\end{align}
Clearly, if the discrete right Hamilton's equations, $q_{k}=D_{2} H_{d}^+(q_{k-1}, p_{k})$, $p_k=D_{1} H_{d}^+(q_k, p_{k+1})$, are satisfied, then the functional is stationary. Conversely, if the functional is stationary, a discrete analogue of the fundamental theorem of the calculus of variations yields the discrete right Hamilton's equations.
 \end{proof}
The above lemma states that the discrete-time solution trajectory of the discrete right Hamilton's  equations \eqref{dish2} extremizes the discrete functional \eqref{disHtype2bv} for
fixed $q_0, p_N$. However, it does not indicate how the discrete solution is related to $p_0$, $q_N$. Note that the discrete solution trajectory that renders $\mathfrak{S}_d((\{(q_k, p_k)\}_{k=0}^N)$ stationary depends on the boundary conditions $q_0$, $p_N$. Consequently, we can introduce the function $\mathcal{S}_d$ which is given by the extremal value of the discrete functional $\mathfrak{S}_d$ as a function of the boundary conditions $q(t_0), p(t_N)$, and is explicitly given by
\begin{align}\label{defdis2map}
\mathcal{S}_d(q(t_0), p(t_N)) &=  \ext_{\substack{ (q_k, p_k) \in T^*Q\\ q_0=q(t_0), p_N=p(t_N)}}
\mathfrak{S}_d(\{(q_k, p_k)\}_{k=0}^N) \\
\nonumber &= \ext_{\substack{ (q_k, p_k) \in T^*Q\\ q_0=q(t_0), p_N=p(t_N)}}  p_N q_N
-\sum_{k=0}^{N-1}(p_{k+1} q_{k+1} -H_d^+(q_k,
 p_{k+1})).
\end{align}
Then, using a similar approach to the proof of Theorem \ref{thm1}, we compute the
derivatives of $\mathcal{S}_d(q_0, p_N) $ with respect to $q_0,
p_N.$ By reindexing the sum, which is the discrete analogue of integration by parts, we  obtain
\begin{align*}
\frac{\partial \mathcal{S}_d}{\partial q_0}(q_0, p_N)  &=\frac{\partial }{\partial q_0}\left(-\sum_{k=0}^{N-2}p_{k+1}q_{k+1}+\sum_{k=0}^{N-1} H_{d}^+(q_k, p_{k+1})\right) \\
&=-\sum_{k=1}^{N-1}\frac{\partial p_{k}}{\partial q_0}\left(q_{k}
-D_{2} H_{d}^+(q_{k-1}, p_{k})\right)-\sum_{k=1}^{N-1}
\frac{\partial q_{k}}{\partial q_0} \left(p_k-D_{1} H_{d}^+(q_k,
p_{k+1})\right) +D_{1} H_{d}^+(q_0, p_{1}).
\end{align*}
By Lemma \ref{lemma:discrete_hameq}, the extremum of $\mathfrak{S}_d$
is obtained if the discrete curve satisfies the discrete right
Hamilton's equations \eqref{dish2}. Thus, by the definition of
$\mathcal{S}_d(q_0, p_N)$, the above equation reduces to
 \begin{align}\label{bv1} D_{1} \mathcal{S}_d(q_0, p_N)=D_{1}
H_{d}^+(q_0, p_{1}).
\end{align}
A similar argument yields
\begin{align}\label{bv2}
\frac{\partial \mathcal{S}_d}{\partial p_N}(q_0, p_N) &=\frac{\partial }{\partial p_N}\left(-\sum_{k=0}^{N-2}p_{k+1}q_{k+1}+\sum_{k=0}^{N-1} H_{d}^+(q_k, p_{k+1})\right) \\
\nonumber &=-\sum_{k=1}^{N-1}\frac{\partial p_{k}}{\partial
p_N}\left(q_{k} -D_{2} H_{d}^+(q_{k-1},
p_{k})\right)-\sum_{k=1}^{N-1} \frac{\partial
q_{k}}{\partial p_N}\left(p_k-D_{1}
H_{d}^+(q_k, p_{k+1})\right) +D_{2} H_{d}^+(q_{N-1}, p_{N})\\
\nonumber &=D_{2} H_{d}^+(q_{N-1}, p_{N}).
\end{align}
Recall that the exact discrete Hamiltonian $\mathcal{S}(q_0, p_T)$ defined in \eqref{2map} is a Type II generating function of the symplectic map, implicitly defined by the relation \eqref{defp0q1}, that is the exact flow map of the continuous Hamilton's equations. To be consistent with this, we require $\mathcal{S}_d(q_0, p_N)$ satisfies the relation
\eqref{defp0q1}, which is to say
\begin{alignat}{3}\label{disdefp0q1}
q_N&=D_{2} \mathcal{S}_d(q_0,
p_N),&\qquad p_0&=D_{1} \mathcal{S}_d(q_0, p_N).
\intertext{Comparing \eqref{bv1}--\eqref{bv2} and \eqref{disdefp0q1} we obtain}
\label{disbv}
q_N&=D_{2} H_{d}^+(q_{N-1}, p_{N}),&\qquad p_0&=D_{1} H_{d}^+(q_0, p_{1}).
\end{alignat}
Then, by combining \eqref{dish2} and \eqref{disbv}, we obtain
the complete set of discrete right Hamilton's  equations
\begin{subequations}\label{final}
\begin{alignat}{2}
q_{k+1}&=D_{2} H_d^+(q_k, p_{k+1}), &\qquad k&=0, 1, \ldots, N-1, \label{final1}\\
p_k&= D_{1}H_d^+(q_k, p_{k+1}), &\qquad k&=0, 1, \ldots, N-1. \label{final2}
\end{alignat}
\end{subequations}
It is easy to see that
\begin{align*}
0&=ddH_d^+(q_k, p_{k+1})=d\left(D_1 H_d^+(q_k, p_{k+1})d q_k+D_2 H_d^+(q_k, p_{k+1})dp_{k+1}\right)\\
&=d\left(p_k dq_k+q_{k+1}dp_{k+1}\right)=dp_k \wedge dq_k -dp_{k+1} \wedge dq_{k+1},
\end{align*}
for $k=0, \dots, N-1$. Then, successively applying above equation gives
$$dp_0 \wedge dq_0 =dp_1 \wedge dq_1= \cdots =dp_{N-1} \wedge dq_{N-1} =dp_{N}\wedge dq_N.$$
This implies that the map from the initial state $(q_0, p_0)$ to the final
state $(q_N, p_N)$ defined by \eqref{disdefp0q1} is sympletic, since it is the composition  of $N$ symplectic maps $(q_k, p_k)\mapsto (q_{k+1}, p_{k+1}),
k=0, \ldots, N-1,$ which are given by \eqref{final1}--\eqref{final2}. Alternatively, one can directly prove symplecticity of the map $(q_0,p_0)\mapsto(q_N, p_N)$ by using \eqref{disdefp0q1} to compute $0=d^2 \mathcal{S}_d(q_0,p_N)=dp_0\wedge dq_0-dp_N\wedge dq_N$. Given initial conditions $q_0, p_0$, and under the regularity assumption $\left|\frac{\partial^2 H_d^+}{\partial q_k
\partial p_{k+1}}(q_k, p_{k+1})\right|\neq 0$, we can solve \eqref{final2} to obtain $p_1,$ then substitute $p_1$ into
\eqref{final1} to get $q_1.$ By repeatedly applying this process, we
obtain the discrete solution trajectory $\{(q_k, p_k)\}_{k=1}^N.$

\subsection{Discrete Type II Hamilton--Jacobi Equation} A discrete analogue of the Hamilton--Jacobi equation was first introduced in \cite{ElSc1996}, and the connections to discrete Hamiltonian mechanics, and discrete optimal control theory were explored in \cite{OhBlLe2009}. In essence, the discrete Hamilton--Jacobi equation therein can be viewed as a composition theorem that relates the discrete Hamiltonians that generate the maps over subintervals, with the discrete Lagrangian that generates the map over the entire time interval.

We will adopt the derivation of the discrete Hamilton--Jacobi equation in \cite{OhBlLe2009}, which is based on introducing a discrete analogue of Jacobi's solution, to the setting of Type II generating functions.

\begin{theorem}
  \label{thm:DJS}
  Consider the discrete extremum function \eqref{defdis2map}:
  \begin{equation}
    \label{eq:S_d+}
  \mathcal{S}_d^k(p_k)
    = p_k q_k - \sum_{l=0}^{k-1}\left[p_{l+1}q_{l+1}-H_d^+(q_l,p_{l+1})\right],
  \end{equation}
which can be obtained from the discrete functional \eqref{disHtype2bv} by evaluating it along a solution of the right discrete Hamilton's equations \eqref{final}. Each $\mathcal{S}_d^k(p_k)$ is viewed as a function of the momentum $p_k$ at the discrete end time $t_k$. Then, these satisfy the discrete Type II Hamilton--Jacobi equation:
\begin{equation}
\label{eq:dHJ}
\mathcal{S}_d^{k+1}(p_{k+1})-\mathcal{S}_d^k(p_k)=H_d^+(D\mathcal{S}_d^k(p_k),p_{k+1})-p_k\cdot D\mathcal{S}_d^k(p_k).
\end{equation}
\end{theorem}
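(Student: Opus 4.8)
The plan is to establish \eqref{eq:dHJ} by evaluating both sides of the identity explicitly and checking that they agree, using only the definition \eqref{eq:S_d+} together with the fact that the underlying discrete trajectory $\{(q_l,p_l)\}$ solves the discrete right Hamilton's equations \eqref{final}.

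First I would compute the left-hand side. Writing out $\mathcal{S}_d^{k+1}(p_{k+1})$ and $\mathcal{S}_d^{k}(p_k)$ from \eqref{eq:S_d+}, the two sums differ only in the $l=k$ summand, so the difference telescopes to
\[
\mathcal{S}_d^{k+1}(p_{k+1})-\mathcal{S}_d^{k}(p_k)=p_{k+1}q_{k+1}-p_kq_k-\bigl(p_{k+1}q_{k+1}-H_d^+(q_k,p_{k+1})\bigr)=H_d^+(q_k,p_{k+1})-p_k\cdot q_k.
\]
This step is purely algebraic and presents no difficulty.

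Next I would identify $D\mathcal{S}_d^{k}(p_k)$. Viewing $\mathcal{S}_d^{k}$ as a function of $p_k$ alone, with the initial position $q_0$ held fixed and the interior nodes $(q_l,p_l)$, $l<k$, determined from $(q_0,p_k)$ by the discrete flow \eqref{final}, I would differentiate \eqref{eq:S_d+} with respect to $p_k$ and reindex the resulting sum --- this is precisely the discrete summation-by-parts computation that produced \eqref{bv2}. Every term carrying an implicit derivative $\partial q_l/\partial p_k$ or $\partial p_l/\partial p_k$ is multiplied by a factor $q_l-D_2H_d^+(q_{l-1},p_l)$ or $p_l-D_1H_d^+(q_l,p_{l+1})$, each of which vanishes by Lemma \ref{lemma:discrete_hameq}; what remains is the single boundary term $D_2H_d^+(q_{k-1},p_k)$, which equals $q_k$ by \eqref{final}. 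Hence $D\mathcal{S}_d^{k}(p_k)=q_k$. (Equivalently, one may invoke the relation $q_N=D_2\mathcal{S}_d(q_0,p_N)$ from \eqref{disdefp0q1} with $N$ replaced by $k$, since $\mathcal{S}_d^k$ is the discrete extremum function over the first $k$ steps.) Substituting $D\mathcal{S}_d^{k}(p_k)=q_k$ into the right-hand side of \eqref{eq:dHJ} then gives $H_d^+(q_k,p_{k+1})-p_k\cdot q_k$, which is exactly the expression computed above for the left-hand side, and the identity follows.

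The main obstacle is the identification $D\mathcal{S}_d^{k}(p_k)=q_k$: one must carefully track the implicit dependence of all interior nodes on $p_k$ and invoke the discrete right Hamilton's equations at every interior index so that the reindexed sum collapses to just the boundary contribution. Once this is in hand, the rest of the proof is routine bookkeeping, and I would present the left-hand side telescoping and the final substitution in a couple of lines.
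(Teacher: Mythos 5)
Your proposal is correct and follows essentially the same route as the paper: telescope the sum to obtain $\mathcal{S}_d^{k+1}(p_{k+1})-\mathcal{S}_d^k(p_k)=H_d^+(q_k,p_{k+1})-p_k\cdot q_k$, establish the key identity $D\mathcal{S}_d^k(p_k)=q_k$ via the discrete right Hamilton's equations, and substitute. The only cosmetic difference is in how that identity is obtained: the paper differentiates the telescoped one-step relation with respect to $p_k$, so that only the single equation $p_k=D_1H_d^+(q_k,p_{k+1})$ is needed, whereas you differentiate the full sum and rerun the summation-by-parts argument of \eqref{bv2} (or equivalently cite \eqref{disdefp0q1} with $N$ replaced by $k$), which uses all the interior equations; both are valid.
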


\begin{proof}
  From Eq.~\eqref{eq:S_d+}, we have
  \begin{equation}
    \label{eq:diffS_d+}
    \mathcal{S}_{d}^{k+1}(p_{k+1}) - \mathcal{S}_{d}^{k}(p_{k}) = H_d^+(q_k, p_{k+1})-p_k\cdot q_k,
  \end{equation}
  where $q_k$ is considered to be a function of $p_{k}$ and $p_{k+1}$, i.e., $q_{k} = q_{k}(p_{k}, p_{k+1})$.
  Taking the derivative of both sides with respect to $p_k$, we have
  \begin{equation*}
    -D\mathcal{S}_{d}^{k}(p_k) = -q_k + \frac{\partial q_k}{\partial p_k} \cdot \left[ D_1 H_d^+(q_k, p_{k+1})-p_k\right].
  \end{equation*}
  However, the term in the brackets vanish because the right discrete Hamilton's equations \eqref{final} are assumed to be satisfied.
  Thus we have
  \begin{equation}
    \label{eq:p-dS+}
    q_k = D\mathcal{S}_d^k(p_k).
  \end{equation}
  Substituting this into \eqref{eq:diffS_d+} gives \eqref{eq:dHJ}.
\end{proof}

\subsection{Summary of Discrete and Continuous Results}
We have introduced the continuous and discrete variational formulations of Hamiltonian mechanics in a parallel fashion, and the correspondence between the two are summarized in Figure \ref{fig1}. Similarly, the correspondence between the continuous and discrete Type II Hamilton--Jacobi equations are summarized in Table \ref{table1}.
\tikzstyle{decision} = [ rounded corners, minimum height=4em, draw,
very thin,   text width=90pt, text  centered, node distance=2cm,
inner sep=0pt,font=\scriptsize] \tikzstyle{line} = [draw, very thin,
-stealth']
\begin{figure}[H]\caption{Continuous and discrete \textit{Type II  Hamilton's phase space variational principle}.  In the continuous case, the variation of the action functional $\mathfrak{S}$ over the space of curves gives Hamilton's equations, and the
derivatives of the extremum function $\mathcal{S}$ with respect to the boundary points yield the exact flow map of Hamilton's equation. In the
discrete case, the variation of the discrete action functional $\mathfrak{S}_d$ over the space of discrete curves
gives the discrete right Hamilton's equations and the derivatives of extremum functional $\mathcal{S}_d $ with respect to the boundary points yield the symplectic map from the initial state to the final state.}
\vspace{12pt}
    \begin{tikzpicture}\label{fig1}[node distance = 1cm, auto]{scale=0.2}
    \node [decision,] (cpq){ Cotangent Space \\  \vspace{6pt}  $(q, p) \in T^* Q $};
    \node [decision, right of =cpq, node distance=8cm] (dcpq){Disc. Cotangent Space  \\  \vspace{6pt} $(q_k, p_{k+1}) \in Q\times Q^*  $};
    \node[decision, below of=cpq] (ham) {Hamiltonian      \\   \vspace{6pt} $H(q, p)$};
    \node[decision,  below of=dcpq,  ] (dham) {Disc. Right Hamiltonian     \\  \vspace{6pt} $H_d^+(q_k, p_{k+1})$};
    \node[decision,   below of=ham,xshift=-20mm ](hact){Action Functional $\mathfrak{S}$  };
    \node[decision,  below of=dham,xshift=-20mm](dhact){Disc. Action  Functional  $\mathfrak{S}_d$};
     \node[decision,  below of=ham, xshift=20mm ](hexe){ Extremum Function $\mathcal{S}$ };
     \node[decision,  below of=dham, xshift=20mm ](dhexe){ Disc. Extremum Function $\mathcal{S}_d$ };
     \node[decision,  below of=hact,](heq){Hamilton's Eqn.  \\  \vspace{6pt} $\dot{q}=\frac{\partial H}{\partial p}, \dot{p}=-\frac{\partial H}{\partial q}$ };
    \node[decision,  below of=dhact](dheq){Disc. Right Hamilton's Eqn. \\   $q_{k}=D_{2} H_d^+(q_{k-1}, p_{k})$ \\ $ p_k= D_{1} H_d^+(q_k, p_{k+1})$ };
\node[decision, below of=hexe](bv){ $q_T=D_2 \mathcal{S}(q_0,
p_T)$
 \\ \vspace{6pt} $p_0=D_1 \mathcal{S}(q_0, p_T)
$
};
    \node[decision,  below of=dhexe](dbv){$ q_N=D_{2}
\mathcal{S}_d(q_0, p_N)  $ \vspace{3pt}  \\  \vspace{6pt}$p_0=D_{1}\mathcal{S}_d(q_0, p_N)$ };
     \node[decision, below of=heq,xshift=20mm ](csym){Symplecticity \\  $0=dd\mathcal{S}=dp_0 \wedge dq_0-dp_T \wedge dq_T$};
     \node[decision, below of=dheq,xshift=20mm](dcsym){Symplecticity  \\  $0=dd\mathcal{S}_d=dp_0 \wedge dq_0 -dp_N \wedge dq_N$};
  \path [line] (cpq)--(ham); \path[line] (ham)--(hact);
  \path[line] (ham)--(hexe);
    \path[line] (hact)--(heq); \path[line] (hexe)--(bv);
    \path[line](heq)--(csym); \path[line](bv)--(csym);

    \path [line] (dcpq)--(dham); \path[line] (dham)--(dhact); \path[line] (dham)--(dhexe);
    \path[line] (dhact)--(dheq); \path[line] (dhexe)--(dbv);
    \path[line](dheq)--(dcsym); \path[line](dbv)--(dcsym);
  \end{tikzpicture}
  \end{figure}
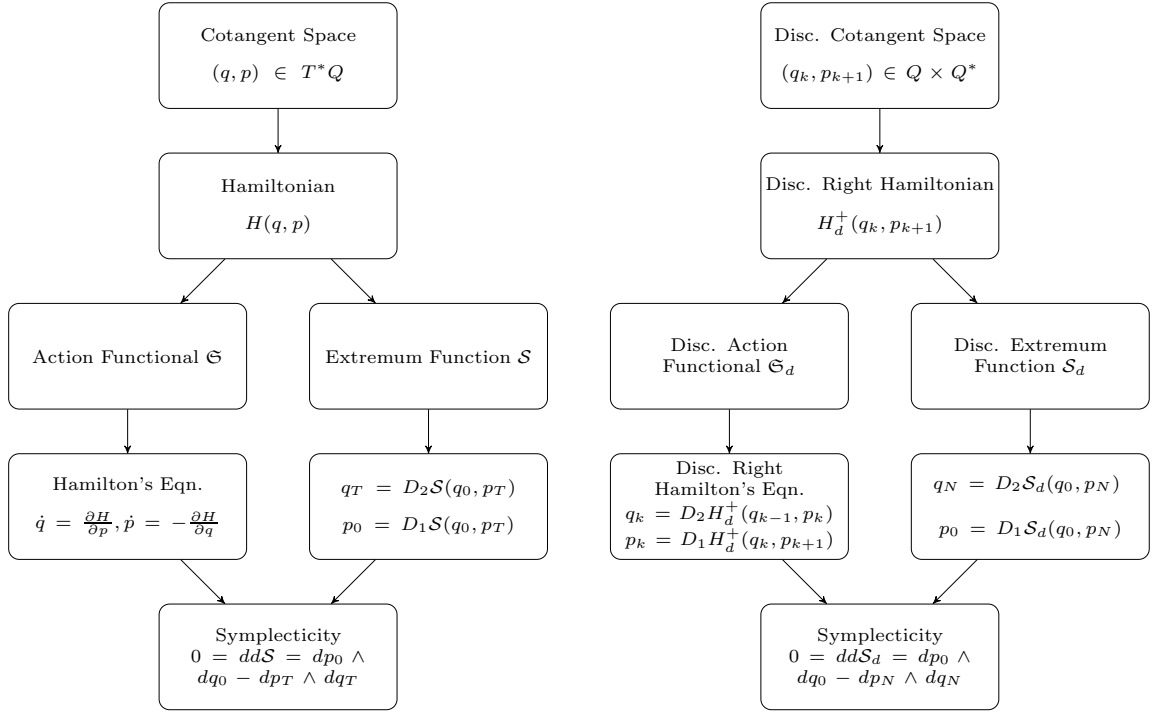

  \begin{table}[h]
    \centering
    \caption{Correspondence between ingredients in the continuous and discrete Type II Hamilton--Jacobi theories; $\mathbb{N}_{0}$ is the set of non-negative integers and $\mathbb{R}_{\geq 0}$ is the set of non-negative real numbers.}
    \label{table1}
    \renewcommand{\arraystretch}{1.5}
    \small
    \begin{tabular}{|c|c|}
      \hline
      {\bf Continuous} & {\bf Discrete}
      \\\hline\hline
      $\bigstrut (p, t) \in Q^* \times \mathbb{R}_{\geq 0} $ & $(p_k, k) \in Q^* \times \mathbb{N}_{0}$
      \\\hline
      $\displaystyle \bigstrut \dot{q} = \partial H/\partial p, $ & $\displaystyle q_{k+1} = D_{2}H_{d}^{+}(q_{k}, p_{k+1})$,
      \\
      $\displaystyle \bigstrut \dot{p} = -\partial H/\partial q $ & $\displaystyle p_{k} = D_{1}H_{d}^{+}(q_{k}, p_{k+1})$
      \\\hline
      \multirow{2}{*}{$\displaystyle \bigstrut S_2(p, t) \equiv p(t) q(t)-\int_{0}^{t} \left[ p(s) \dot{q}(s) - H(q(s), p(s))\right] ds$}
      &
      \multirow{2}{*}{$\displaystyle \bigstrut \mathcal{S}_{d}^{k}(p_{k}) \equiv p_k q_k-\sum_{l=0}^{k-1} \left[ p_{l+1}  q_{l+1} - H_{d}^{+}(q_{l}, p_{l+1})\right]$}
      \\
      &
      \\\hline
       \multirow{2}{*}{$\displaystyle dS_2 = \frac{\partial S_2}{\partial p}\,dp + \frac{\partial S_2}{\partial t}\,dt$}
      &
       \multirow{2}{*}{$\displaystyle \mathcal{S}_{d}^{k+1}(p_{k+1}) - \mathcal{S}_{d}^{k}(p_{k})$}
      \\
      &
      \\\hline
      $\displaystyle \bigstrut q\,dp + H(q,p)\,dt $ & $\displaystyle \bigstrut H_{d}^{+}(q_{k}, p_{k+1})-p_{k} \cdot q_{k}$
      \\\hline
      \multirow{2}{*}{$\displaystyle \bigstrut \frac{\partial S_2}{\partial t} = H\left( \frac{\partial S_2}{\partial p}, p \right)$}
      &
      $\displaystyle \bigstrut \mathcal{S}_{d}^{k+1}(p_{k+1}) - \mathcal{S}_{d}^{k}(p_{k})\hfill$
      \\
      &
      \hfill$=H_d^+(D\mathcal{S}_d^k(p_k), p_{k+1})-p_k\cdot D\mathcal{S}_d^k(p_k)$
      \\\hline
    \end{tabular}
  \end{table}

\section{Galerkin Hamiltonian Variational Integrators}\label{sec:galerkin_vi}

\subsection{Exact Discrete Hamiltonian} The exact discrete Hamiltonian $ H_{d,\rm exact}^+(q_0, p_{1})$ and the discrete right Hamilton's equations \eqref{final} generate a discrete solution curve $\{q_k, p_k\}_{k=0}^N$ that samples the exact solution $(q(\cdot),p(\cdot))$ of the continuous Hamilton's equations for the continuous Hamiltonian $H(q,p)$, i.e., $q_k=q(t_k)$ and $p_k=p(t_k)$.

By comparing the definition \eqref{dishd} of a discrete right Hamiltonian function $H_d^+(q_0, p_1)$ on $[0, h]$ and the corresponding discrete Hamiltonian flow in \eqref{final} to the definition  \eqref{2map} of the extremal function  on $[0, T]$ and corresponding symplectic map given by \eqref{defp0q1}, and applying Theorem \ref{thm1}, it is clear that the discrete right Hamiltonian function on $[0,h]$, given by
\begin{align}\label{exactdishd1}
 H_{d,\rm exact}^+(q_0,
 p_{1})
=  \ext_{\substack{(q, p) \in
C^2([0, T],T^*Q)\\q(t_0)=q_0, p(t_1)=p_1}} p_1 q_1 - \int_0^h \left[ p(t) \dot{q}(t)-H(q(t), p(t)) \right]dt,
\end{align}
is an exact discrete right Hamiltonian function on $[0, h]$.

\subsection{Galerkin Discrete Hamiltonian}
In general, the exact discrete Hamiltonian is not computable, since it requires one to evaluate the functional $\mathfrak{S}(q(\cdot),p(\cdot))$ given in \eqref{Htype2bv} on a solution curve of Hamilton's equations that satisfies the given boundary conditions $(q_0,p_1)$. However, the variational characterization of the exact discrete Hamiltonian naturally leads to computable approximations based on Galerkin techniques. In practice, one replaces the path space $C^2([0, T],T^*Q)$, which is an infinite-dimensional function space, with a finite-dimensional function space, and uses numerical quadrature to approximate the integral. 

Let $\{\psi_i(\tau)\}_{i=1}^s$, $\tau \in [0, 1]$, be a set of basis functions for a $s$-dimensional function space $C_d^s$. We also choose a numerical quadrature formula with quadrature weights $b_i$, and quadrature points $c_i$, i.e., $\int_0^1 f(x) dx \approx \sum_{j=1}^s b_i f(c_i)$. From these basis functions and the numerical quadrature formula, we will systematically construct a generalized Galerkin Hamiltonian variational integrator in the following manner:

\renewcommand{\theenumi}{\arabic{enumi}.}
\renewcommand{\labelenumi}{\theenumi}
\renewcommand{\theenumii}{\roman{enumii}.}
\renewcommand{\labelenumii}{\theenumii}

\begin{enumerate}
  \item Use the basis functions $\psi_i$ to approximate the velocity $\dot{q}$ over the interval $[0,h]$,
\[
  \dot{q}_d(\tau h)=\sum_{i=1}^s V^i \psi_i(\tau).
\]
  \item Integrate $\dot{q}_d(\rho)$ over $[0,
\tau h],$ to obtain the approximation for the position $q$,
\[
q_d(\tau h)=q_d(0)+ \int_0^{\tau h }\sum_{i=1}^s V^i \psi_i(\rho) d(\rho h)=q_0+h\sum_{i=1}^s V^i \int_0^{\tau} \psi_i(\rho) d\rho,
\]
where we applied the boundary condition $q_d(0)=q_0$. Applying the boundary condition $q_d(h)=q_1$ yields
\[
 q_1=q_d(h)=q_0+h\sum_{i=1}^s V^i \int_0^1 \psi_i(\rho) d\rho\equiv q_0+h\sum_{i=1}^s B_i V^i,
\]
where $B_i=\int_0^{1} \psi_i(\tau)d\tau$.
Furthermore, we introduce the internal stages,
\[
 Q^i\equiv q_d(c_i h)=q_0+h\sum_{j=1}^s V^j
\int_0^{c_i} \psi_j(\tau) d\tau \equiv q_0+h\sum_{j=1}^s A_{ij} V^j,
\]
where $A_{ij}=\int_0^{c_i} \psi_j(\tau) d\tau$.

\item Let $P^i=p(c_i h)$. Use the numerical quadrature formula $(b_i,c_i)$ and the finite-dimensional function space $C_d^s$ to construct $H_d^+(q_0, p_1)$ as follows
\begin{align}\nonumber
H_d^+(q_0, p_1) &\approx   \ext_{\substack{(q, p) \in
C^2([0, T],T^*Q)\\q(t_0)=q_0, p(t_1)=p_1}} p_1 q_1 - \int_0^h \left[ p(t) \dot{q}(t)-H(q(t), p(t)) \right]dt
\\ \label{H2d}
H_d^+(q_0, p_1) &=\ext_{q_d\in C_d^s([0,h],Q), P_i\in Q^*}  \left\{p_1 q_d(h)-h\sum_{i=1}^s
b_i\left[p(c_i h) \dot{q}_d(c_i h)  - H(q_d(c_i h), p(c_i h))\right]\right\}
\\
\nonumber &=\ext_{V^i, P^i}\left\{ p_1\left(
q_0+h\sum_{i=1}^s B_i V^i\right)-h\sum_{i=1}^s b_i\left[P^i \sum_{j=1}^s
V^j \psi_j(c_i) - H\left( q_0+h\sum_{j=1}^s A_{ij} V^j, P^i\right)\right]\right \} \\
\nonumber & \equiv \ext_{V^i,P^i}  K(q_0, V^i, P^i, p_1).
\end{align}
To obtain an expression for $H_d^+(q_0,p_1)$, we first compute the stationarity conditions for $K(q_0,V^i, P^i,p_1)$ under the fixed boundary condition $(q_0,p_1)$.
\begin{subequations}\label{gsta}
\begin{alignat}{2}\label{gsta1}
0&=\frac{\partial K(q_0, V^i, P^i, p_1)}{\partial V^j} = h  p_1 B_j
-h\sum_{i=1}^{s}b_i\left(P^i \psi_j(c_i) - h A_{ij} \frac{\partial H}{\partial q}(Q^i,P^i)
   \right),&\qquad j&=1, \ldots, s.\\
\label{gsta2}
0&=\frac{\partial  K(q_0, V^i, P^i, p_1)}{\partial P^j} =h b_j
\left(\sum_{i=1}^s \psi_i(c_j )V^i - \frac{\partial H}{\partial p}( Q^j, P^j)\right),& j&=1, \ldots, s.
 \end{alignat}
\end{subequations}

\item By solving the $2 s$ stationarity conditions \eqref{gsta}, we can express the parameters $V^i,
P^i$, in terms of $q_0, p_1$, i.e., $ V^i=V^i(q_0, p_1)$ and
$P^i=P^i(q_0, p_1).$ Then, the symplectic map $(q_0,p_0)\mapsto (q_1,p_1)$ can be expressed in terms of the internal stages
\begin{align}\label{gdefp0}
p_0&=\frac{\partial H_d^+(q_0, p_1)}{\partial q_0} =
\frac{\partial K(q_0, V^i(q_0, p_1), P^i(q_0, p_1), p_1)}{\partial
q_0} \\ \nonumber &= \frac{\partial K}{\partial q_0} +\frac{\partial
K}{\partial V^i}\frac{\partial V^i}{\partial q_0}+\frac{\partial
K}{\partial P^i}\frac{\partial
P^i}{\partial q_0} =\frac{\partial K}{\partial q_0}\\
\nonumber &= p_1+ h\sum_{i=1}^s b_i \frac{\partial H}{\partial q}( Q^i,P^i),
\end{align}
Similarly, we obtain
\begin{align} \label{gdefq1}
q_1&=\frac{\partial H_d^+(q_0, p_1)}{\partial p_1} = \frac{\partial K(q_0, V^i(q_0, p_1), P^i(q_0, p_1),
p_1)}{\partial p_1} \\ \nonumber &= \frac{\partial K}{\partial
V^i}\frac{\partial V^i}{\partial p_1}+\frac{\partial K}{\partial
P^i}\frac{\partial P^i}{\partial p_1}+\frac{\partial
K}{\partial p_1} =\frac{\partial K}{\partial p_1}\\
\nonumber &= q_0+ h\sum_{i=1}^s B_i V^i.
\end{align}
\end{enumerate}

Without loss of generality, we assume that the quadrature weights $b_i \neq 0$. Then, the stationarity condition
\eqref{gsta2} reduces to $\sum_{i=1}^s \psi_i(c_j)V^i - \frac{\partial H}{\partial p}(
Q^j,P^j)=0.$ Moreover, by substituting \eqref{gdefp0} into the
stationarity condition \eqref{gsta1}, we obtain,
$\sum_{i=1}^s b_i P^i \psi_j(c_i)-p_0 B_j+h\sum_{i=1}^s(b_i B_j -b_i
A_{ij}) \frac{\partial H}{\partial q}(Q^i, P^i)=0$.

In summary, the above procedure gives a systematic way to construct a generalized Galerkin Hamiltonian
variational integrator, which can be rewritten in the following compact form,
\begin{subequations}
\begin{alignat}{2}\label{sprkp1}
q_1&=q_0+h \sum_{i=1}^s B_i V^i,\\
\label{sprkp2}
p_1&=p_0-h\sum \limits_{i=1}^s b_i \frac{\partial H}{\partial q}(Q^i,P^i),\\
\label{sprkp3}
    Q^i&=q_0+h \sum \limits_{j=1}^s A_{ij} V^j,& i&=1, \ldots, s,\\
\label{sprkp4}
0&=\sum_{i=1}^s b_i P^i \psi_j(c_i)-p_0 B_j+h\sum_{i=1}^s(b_i B_j -b_i
A_{ij}) \frac{\partial H}{\partial q}(Q^i, P^i),&\qquad j&=1, \ldots, s,\\
\label{sprkp5}
0&=\sum_{i=1}^s \psi_i(c_j)V^i - \frac{\partial H}{\partial p}( Q^j,P^j),& j&=1, \ldots, s,
   \end{alignat}
\end{subequations}
where $(b_i, c_i)$ are the quadrature weights and quadrature points, and $B_i=\int_0^1 \psi_i(\tau) d\tau$,  $A_{ij}= \int_0^{c_i}
\psi_j(\tau) d\tau$.

This is the general form of a Galerkin Hamiltonian variational integrator. Issues of solvability, convergence, and accuracy, depend on the specific Hamiltonian system, and the choice of finite-dimensional function space $C_d^s$ and numerical quadrature formula $(b_i, c_i)$. We will not perform an in depth analysis here, but we will illustrate how our proposed framework is related to the discrete Lagrangian based methods given in \cite{MaWe2001} and p.\,209 of \cite{HaLuWa2006}.

\subsection{Galerkin Variational Integrators from the Lagrangian Point of View}

In this subsection, we investigate the generalized Galerkin variational
integrators from Lagrangian point of view when the Hamiltonian
function is hyperregular. In this case, the exact discrete right
Hamiltonian function is related by the Legendre transformation to exact discrete Lagrangian
function, i.e.,
\begin{align}
L_d^{\rm exact}(q_0, q_1)&= \ext_{\substack{q\in C^2([0,h],Q) \\ q(0)=q_0, q(h)=q_1}} \int_0^h L(q(t), \dot q(t)) dt\\
\nonumber &=  \ext_{\substack{(q,p)\in C^2([0,h],T^*Q)\\q(0)=q_0, q(h)=q_1}} \int_0^h p\dot{q} -H(q,p) dt\\
\nonumber &=\left.p_1 q_1 - H_{d,\rm exact}^+(q_0, p_1)\right|_{q_1=D_2 H_{d,\rm exact}^+(q_0, p_1)}.
\end{align}

We wish to see how Galerkin variational integrators that are derived from the Hamiltonian and Lagrangian sides are related. In order for the comparison to make sense, we will approximate the exact discrete Lagrangian using the same basis functions and numerical quadrature formula as on the Hamiltonian side. As before, let $\{\psi_i(\tau)\}_{i=1}^s$, $\tau \in [0, 1]$, be a set of basis functions for a $s$-dimensional function space $C_d^s$, and choose a numerical quadrature formula with quadrature weights $b_i$, and quadrature points $c_i$. From these basis functions and the numerical quadrature formula, we will systematically construct a generalized Galerkin Lagrangian variational integrator in the following manner:

\begin{enumerate}
  \item Use the basis functions $\psi_i$ to approximate the velocity $\dot{q}$ over the interval $[0,h]$,
\[
  \dot{q}_d(\tau h)=\sum_{i=1}^s V^i \psi_i(\tau).
\]
  \item Integrate $\dot{q}_d(\rho)$ over $[0,
\tau h],$ to obtain the approximation for the position $q$,
\[
q_d(\tau h)=q_d(0)+ \int_0^{\tau h }\sum_{i=1}^s V^i \psi_i(\rho) d(\rho h)=q_0+h\sum_{i=1}^s V^i \int_0^{\tau} \psi_i(\rho) d\rho,
\]
where we applied the boundary condition $q_d(0)=q_0$. In the discrete Lagrangian framework, the boundary conditions are given by $(q_0, q_1)$, so we will use a Lagrange multiplier to enforce the boundary condition $q_d(h)=q_1$,
\[
 q_1=q_d(h)=q_0+h\sum_{i=1}^s V^i \int_0^{1} \psi_i(\rho) d\rho\equiv q_0+h\sum_{i=1}^s B_i V^i,
\]
where $B_i=\int_0^{1} \psi_i(\tau)d\tau$.
Furthermore, we introduce the internal stages,
\begin{equation}\label{lgdefqi}
 Q^i\equiv q_d(c_i h)=q_0+h\sum_{j=1}^s V^j
\int_0^{c_i} \psi_j(\tau) d\tau \equiv q_0+h\sum_{j=1}^s A_{ij} V^j,
\end{equation}
where $A_{ij}=\int_0^{c_i} \psi_j(\tau) d\tau$, and their velocities,
\begin{align}
\dot{Q}^i&\equiv\dot{q}_d(c_i h)=\sum_{j=1}^s \psi_j(c_i)V^j.
\end{align}
\item Use the numerical quadrature formula $(b_i, c_i)$, and the finite-dimensional function space $C_d^s$ to construct $L_d(q_0,q_1)$ as follows
\begin{align}
\nonumber L_d(q_0, q_1) &\approx \ext_{\substack{q\in C^2([0,h],Q),\lambda \\ q(0)=q_0}} \left[\int_0^h L(q(t), \dot q(t)) dt\right] + \lambda
(q_1-q(h))\\
\label{Ld} L_d(q_0, q_1) &=\ext_{\substack{q_d \in C_d^s([0, h],Q), \lambda\\q(0)=q_0}}
\left[h\sum_{i=1}^s b_i L(q_d(c_i h), \dot{q}_d(c_i h) )\right]+ \lambda
\left(q_1-q_d(h)\right)\\
\nonumber &=\ext_{V^i, \lambda}
\left\{\left[h\sum_{i=1}^s b_i L\left(q_0+h\sum_{j=1}^s A_{ij} V^j, \sum_{j=1}^s V^j \psi_j(c_i)\right)\right]+ \lambda
\left(q_1-q_0-h\sum_{i=1}^s B_i V^i\right)\right\}\\
\nonumber & \equiv \ext_{V^i, \lambda} K(q_0, V^i, \lambda, q_1).
\end{align}
To obtain an expression for $L_d(q_0,q_1)$, we first compute the stationarity conditions for $K(q_0, V^i, \lambda, q_1)$ under the fixed boundary condition $(q_0, q_1)$.
\begin{subequations}\label{lgsta}
\begin{alignat}{2}
\label{lgsta1}
0&=\frac{\partial K(q_0, V^i, \lambda, q_1)}{\partial V^j} = h\sum_{i=1}^{s}b_i\left(\frac{\partial L}{\partial q}(Q^i, \dot{Q}^i) h A_{ij}+
\frac{\partial L}{\partial \dot{q}}(Q^i, \dot{Q}^i) \psi_j(c_i)\right)-h\lambda B_j,&\qquad j&=1,
 \ldots, s.\\
\label{lgsta2}
0&=\frac{\partial  K(q_0, V^i, \lambda, q_1)}{\partial \lambda}
=q_1-q_0-h\sum_{i=1}^s B_i V^i.
 \end{alignat}
\end{subequations}
\item By solving the $2s$ stationarity equations \eqref{lgsta}, we can express the parameters $V^i, \lambda$, in terms of $q_0, q_1$, i.e., $V^i=V^i(q_0,
q_1),\lambda=\lambda(q_0, q_1)$. Then, the symplectic map $(q_0, p_0)\mapsto (q_1, p_1)$ can be expressed in terms of the internal stages and the Lagrange multiplier
\begin{align}\label{lgdefp0}
p_0&=-\frac{\partial L_d(q_0, q_1)}{\partial q_0} =-\left( \frac{\partial K(q_0, V^i(q_0, p_1), \lambda(q_0, p_1),
q_1)}{\partial q_0}\right)
\\ \nonumber &= -\left(\frac{\partial K}{\partial q_0} +\frac{\partial
K}{\partial V^i}\frac{\partial V^i}{\partial q_0}+\frac{\partial
K}{\partial \lambda}\frac{\partial \lambda}{\partial q_0}\right) =-\frac{\partial K}{\partial q_0}\\
\nonumber &= -h\sum_{i=1}^s b_i L_q( Q^i,\dot{Q}^i)+\lambda.
\end{align}
Similarly, we obtain
\begin{align} \label{lgdefp1}
p_1&=\frac{\partial L_d(q_0, q_1)}{\partial q_1} =
\frac{\partial K(q_0, V^i(q_0, p_1), \lambda(q_0, p_1),
q_1)}{\partial q_1}
\\ \nonumber &= \frac{\partial K}{\partial q_1} +\frac{\partial
K}{\partial V^i}\frac{\partial V^i}{\partial q_1}++\frac{\partial
K}{\partial \lambda}\frac{\partial \lambda}{\partial q_1} =\frac{\partial K}{\partial q_1}\\
\nonumber &= \lambda.
\end{align}
\end{enumerate}

By combining \eqref{lgdefp0} and \eqref{lgdefp1}, we obtain $p_1=p_0+h\sum_{i=1}^s b_i L_q(Q^i,\dot{Q}^i)$. Substituting this into the stationarity condition \eqref{lgsta1}, yields $\sum_{i=1}^s b_i \partial L/\partial \dot{q}(Q^i, \dot{Q}^i) \psi_j(c_i)-p_0 B_j-h\sum_{i=1}^s(b_i B_j -b_i A_{ij}) \partial L/\partial q(Q^i, \dot{Q}^i)=0$.

In summary, the above procedure gives a systematic way to construct a generalized Galerkin Lagrangian variational integrator, which can be written in the following compact form,
\begin{subequations}
\begin{alignat}{2}\label{lsprkp1}
q_1&=q_0+h \sum_{i=1}^s B_i V^i,\\
\label{lsprkp2}
p_1&=p_0+h\sum \limits_{i=1}^s b_i \frac{\partial L}{\partial q}(Q^i,\dot{Q}^i),\\
\label{lsprkp3}
Q^i&=q_0+h \sum \limits_{j=1}^s A_{ij} V^j,&\qquad i&=1, \ldots, s\\
\label{lsprkp4}
0&=\sum_{i=1}^s b_i \frac{\partial L}{\partial \dot{q}}(Q^i, \dot{Q}^i) \psi_j(c_i)-p_0 B_j-h\sum_{i=1}^s(b_i B_j -b_i
A_{ij}) \frac{\partial L}{\partial q}(Q^i, \dot{Q}^i),&\qquad j&=1, \ldots, s\\
\label{lsprkp5}
0&=\sum_{i=1}^s \psi_i(c_j)V^i- \dot{Q}^j,&\qquad j&=1, \ldots, s
\end{alignat}
\end{subequations}
where $(b_i, c_i)$ are the quadrature weights and quadrature points, and $B_i=\int_0^1 \psi_i(\tau) d\tau$,  $A_{ij}= \int_0^{c_i}
\psi_j(\tau) d\tau$.

As expected, this is equivalent to the generalized Galerkin Hamiltonian variational integrator, as the following proposition indicates.

\begin{proposition} \label{prop:equivalence} If the continuous Hamiltonian $H(q,p)$ is hyperregular, and we construct a Lagrangian $L(q,\dot{q})$ by the Legendre transformation, then the generalized Galerkin Hamiltonian variational integrator \eqref{sprkp1}--\eqref{sprkp5} and the generalized Galerkin Lagrangian variational integrator \eqref{lsprkp1}--\eqref{lsprkp5}, associated with the same choice of basis functions $\psi^i$ and numerical quadrature formula $(b_i, c_i)$, are equivalent.
\end{proposition}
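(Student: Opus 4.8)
The plan is to show that the Galerkin discrete Hamiltonian $H_d^+(q_0,p_1)$ built in \eqref{H2d} and the Galerkin discrete Lagrangian $L_d(q_0,q_1)$ built in \eqref{Ld} are related by the very same discrete Legendre transform that relates $H_{d,\rm exact}^+$ and $L_d^{\rm exact}$, namely $H_d^+(q_0,p_1)=\ext_{q_1}\left[p_1 q_1-L_d(q_0,q_1)\right]$. Once this identity is in hand, the equivalence of the two integrators is immediate from the standard envelope-theorem argument: the extremal $q_1$ satisfies $p_1=D_2 L_d(q_0,q_1)$, and then $D_2 H_d^+(q_0,p_1)=q_1$, $D_1 H_d^+(q_0,p_1)=-D_1 L_d(q_0,q_1)=p_0$, so the single-step map $(p_0=D_1 H_d^+,\,q_1=D_2 H_d^+)$ of \eqref{final} coincides with the discrete Lagrangian map $(p_1=D_2 L_d,\,p_0=-D_1 L_d)$. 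Since, by \eqref{gdefp0}--\eqref{gdefq1} and \eqref{lgdefp0}--\eqref{lgdefp1}, these are exactly the maps realized by the SPRK systems \eqref{sprkp1}--\eqref{sprkp5} and \eqref{lsprkp1}--\eqref{lsprkp5}, the two integrators are the same.

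To prove the generating-function identity, I would start from $L_d(q_0,q_1)=\ext_{V^i,\lambda}K(q_0,V^i,\lambda,q_1)$. Extremizing over $\lambda$ enforces the constraint $q_1=q_0+h\sum_i B_i V^i$ (this is \eqref{lgsta2}), so after a further extremization over $q_1$ that merely removes this constraint, $p_1 q_1-L_d(q_0,q_1)$ equals $\ext_{V^i}\left[p_1\bigl(q_0+h\sum_i B_i V^i\bigr)-h\sum_i b_i L(Q^i,\dot Q^i)\right]$, with $Q^i,\dot Q^i$ as in \eqref{lgdefqi}. Now invoke hyperregularity: by the definition of $L$, $L(Q^i,\dot Q^i)=\ext_{P^i}\left[P^i\dot Q^i-H(Q^i,P^i)\right]$; since the quadrature weights $b_i$ are nonzero and the $P^i$ are independent, the weighted, sign-reversed sum of these extrema recombines into a single extremization over $(P^i)$, producing exactly $\ext_{V^i,P^i}K(q_0,V^i,P^i,p_1)=H_d^+(q_0,p_1)$, with $K$ the Hamiltonian-side functional of \eqref{H2d}. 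That is the desired relation.

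As an independent cross-check — the computation I would actually write out in full — one can match the two SPRK systems equation by equation. Defining $P^i\equiv \partial L/\partial\dot q(Q^i,\dot Q^i)$ in the Lagrangian scheme, the inverse Legendre identity $\dot Q^j=\partial H/\partial p(Q^j,P^j)$ turns \eqref{sprkp5} into the tautology \eqref{lsprkp5}; the Legendre identities $\partial L/\partial\dot q=p$ and $\partial L/\partial q=-\partial H/\partial q$ along the corresponding points turn \eqref{sprkp2} into \eqref{lsprkp2} and \eqref{sprkp4} into \eqref{lsprkp4}; while \eqref{sprkp1}, \eqref{sprkp3} already coincide with \eqref{lsprkp1}, \eqref{lsprkp3}. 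Comparing \eqref{gdefp0} with \eqref{lgdefp0}, using $p_1=\lambda$ from \eqref{lgdefp1}, confirms that the two prescriptions for $p_0$ agree as well.

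The main obstacle is making the nested-extremization manipulations rigorous: one must justify that $\ext$ genuinely commutes with the weighted, sign-reversed summation over the independent internal momenta $P^i$ and with the elimination of the $\lambda$-constraint, i.e.\ that no spurious critical points are created and that extremizers are in bijective correspondence. Hyperregularity of $H$ is precisely what supplies this, since it makes $p\mapsto p\dot q-H(q,p)$ have a unique nondegenerate critical point for each $(q,\dot q)$, so $\mathbb{F}H$ is a diffeomorphism and the stacked stationarity conditions on the two sides are related by the invertible substitution $(V^i,P^i)\leftrightarrow(V^i,\lambda)$; the standing regularity hypothesis $\left|\partial^2 H_d^+/\partial q_0\,\partial p_1\right|\neq 0$ then guarantees that both discrete flows are well defined. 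Everything else is the routine chain-rule bookkeeping already carried out in deriving \eqref{gdefp0}--\eqref{gdefq1} and \eqref{lgdefp0}--\eqref{lgdefp1}.
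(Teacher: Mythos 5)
Your proposal is correct, and it in fact contains two proofs. The ``independent cross-check'' that you say you would write out in full is, almost verbatim, the paper's own proof: one observes that \eqref{sprkp1}, \eqref{sprkp3} already coincide with \eqref{lsprkp1}, \eqref{lsprkp3}, and then uses the pointwise Legendre identities $\partial L/\partial q=-\partial H/\partial q$, $\partial L/\partial\dot q=p$, $\dot q=\partial H/\partial p$ at the internal stages (with $P^i=\partial L/\partial\dot q(Q^i,\dot Q^i)$) to transform \eqref{sprkp2}, \eqref{sprkp4}, \eqref{sprkp5} into \eqref{lsprkp2}, \eqref{lsprkp4}, \eqref{lsprkp5}. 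Your headline argument is a genuinely different and somewhat stronger route: rather than matching the two systems of stationarity equations, you establish the generating-function identity $H_d^+(q_0,p_1)=\ext_{q_1}\left[p_1q_1-L_d(q_0,q_1)\right]$ between the two Galerkin objects themselves, by eliminating the multiplier $\lambda$ (whose stationarity in $q_1$ forces $\lambda=p_1$, consistent with \eqref{lgdefp1}) and recombining the per-stage extremizations $L(Q^i,\dot Q^i)=\ext_{P^i}\left[P^i\dot Q^i-H(Q^i,P^i)\right]$ into the joint extremization defining $K(q_0,V^i,P^i,p_1)$; the equivalence of the one-step maps then follows from the envelope theorem exactly as in \eqref{gdefp0}--\eqref{gdefq1}. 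What this buys is a statement at the level of generating functions --- the two discrete objects are related by the same discrete Legendre transform that relates $H_{d,\rm exact}^+$ and $L_d^{\rm exact}$ --- which implies, but is not implied by, the coincidence of the update maps; what the paper's equation-matching buys is brevity and the avoidance of any care about interchanging nested extremizations, a point you correctly flag as the one place where hyperregularity (uniqueness and nondegeneracy of the critical point in $p$) must be invoked to keep the critical points in bijection. Both arguments are sound.
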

\begin{proof}
Since we chose the same basis functions and numerical quadrature formula for both methods, the approximations for $q_1$ and $Q^i$ are the same in both methods, as can be seen by comparing  \eqref{sprkp1} and \eqref{lsprkp1}, \eqref{sprkp3} and \eqref{lsprkp3}. Since we assumed that the Lagrangian and Hamiltonian are related by $L(q,\dot q)=p\dot q - H(q,p)$, subject to the Legendre transformation $\dot{q}=\partial H/\partial p(q,p)$, we consider $p$ to be a function of $(q,\dot{q})$, and compute
\begin{align*}
\frac{\partial L}{\partial q}(q,\dot{q})&=\dot{q}\cdot \frac{\partial p}{\partial q}-\frac{\partial H}{\partial q}(q,p)-\frac{\partial H}{\partial p}(q,p)\frac{\partial p}{\partial q}=-\frac{\partial H}{\partial q}(q,p),\\
\frac{\partial L}{\partial \dot{q}}(q,\dot{q})&= \dot{q}\cdot\frac{\partial p}{\partial \dot{q}}+p-\frac{\partial H}{\partial p}(q,p)\frac{\partial p}{\partial \dot{q}}=p.
\end{align*}
Since these identities have to hold at the internal stages, we have that
\begin{align*}
\frac{\partial H}{\partial p}(Q^i, P^i)&=\dot{Q}^i,\\
\frac{\partial H}{\partial q}(Q^i,P^i)&=\frac{\partial L}{\partial q}(Q^i,\dot{Q}^i),\\
P^i&=\frac{\partial L}{\partial \dot{q}}(Q^i,\dot{Q}^i),
\end{align*} 
for $i=1,\ldots,s$. Clearly, substituting these identities into \eqref{sprkp2}, \eqref{sprkp4}, and \eqref{sprkp5}, yields \eqref{lsprkp2}, \eqref{lsprkp4}, and \eqref{lsprkp5}. As such the two systems of equations, \eqref{sprkp1}--\eqref{sprkp5} and \eqref{lsprkp1}--\eqref{lsprkp5}, are equivalent, once the Legendre transformation and the identities relating the continuous Lagrangian and Hamiltonian are taken into account.
\end{proof}


\subsection{Variational integrators and Symplectic Partitioned Runge-Kutta methods}
In this subsection, we consider a special class of Galerkin variational integrators, and demonstrate that they can be implemented as symplectic partitioned Runge--Kutta methods.

Let $C_d^s([0,1],Q)$ be a $s$-dimensional function space, and consider a set of basis functions $\psi_i(\tau)$ on $[0,1]$, and a set of control points $c_i$, $i=1,\ldots, s$. We would like to construct a new set of basis functions $\phi_i(\tau)$ that span the same function space, and satisfies $\phi_i(c_j)=\delta_{ij},$ where $\delta_{ij}$ is the Kronecker delta. This is possible whenever the matrix
\begin{align}\label{invertible} M=
\begin{bmatrix}
      \psi_1(c_1) & \psi_1(c_2) &\cdots & \psi_1(c_s) \\
      \psi_2(c_1) & \psi_2(c_2) &\cdots & \psi_2(c_s)\\
      \vdots & \vdots & \ddots & \vdots\\
     \psi_s(c_1) & \psi_s(c_2) &\cdots & \psi_s(c_s)\\
\end{bmatrix}
\end{align}
is invertible. In particular, let $\psi(\cdot)=[\psi_1(\cdot), \ldots,
\psi_s(\cdot)]^T,$ and construct a new set of basis functions $\phi(\cdot)=[\phi_1(\cdot),\ldots,
\phi_s(\cdot)]^T$ by $\phi(\cdot)=M^{-1} \psi(\cdot).$  It is easy
to see that $\phi_i(c_j )=\delta_{ij}$ since
\begin{align}
\begin{bmatrix}
      \phi_1(c_1) & \phi_1(c_2) &\cdots & \phi_1(c_s) \\
      \phi_2(c_1) & \phi_2(c_2) &\cdots & \phi_2(c_s)\\
      \vdots & \vdots & \ddots & \vdots\\
     \phi_s(c_1) & \phi_s(c_2) &\cdots & \phi_s(c_s)
\end{bmatrix}
&=M^{-1}M=
\begin{bmatrix}
      1 & 0 &\cdots & 0 \\
      0 & 1 &\cdots & 0\\
      \vdots & \vdots & \ddots & \vdots\\
    0 & 0 &\cdots & 1\\
\end{bmatrix}.
\end{align}

We can construct a numerical quadrature formula that is exact on the span of the basis functions $\psi_i(\tau)$ as follows: Since $\phi_i(c_j)=\delta_{ij}$, we can interpolate any function $f(\tau)$ on $[0,1]$ at the control points $c_i$ by taking $\bar{f}(\tau)=\sum_{i=1}^s f(c_i)\phi_i(\tau)$. Then, we obtain the following quadrature formula,
\begin{align}\label{quad}
\int_0^1 f(\tau)d\tau&\approx \int_0^1 \bar{f}(\tau)d\tau=\int_0^1 \sum_{i=1}^s f(c_i)\phi_i(\tau)
d\tau = \sum_{i=1}^s f(c_i)  \left[\int_0^1 \phi_i(\tau)d\tau\right] \equiv \sum_{i=1}^s b_i f(c_i),
\end{align}
where $b_i=\int_0^1 \phi_i(\tau)d\tau$ are the quadrature weights. By construction, the above quadrature formula is exact for any function in the $s$-dimensional function space $C_d^s([0,1],Q)$. Now, if we apply this quadrature formula with quadrature points $c_i$, we obtain a Galerkin variational integrator that can be implemented as a symplectic partitioned Runge--Kutta (SPRK) method.

\begin{theorem}\label{thm3}
Given any set of basis functions $\psi(\cdot)=[\psi_1(\cdot),
\ldots, \psi_s(\cdot)]^T,$ that span $C_d^s([0,1],Q)$, consider the quadrature formula given in \eqref{quad}. Then, the associated generalized Galerkin Hamiltonian variational integrator \eqref{sprkp1}--\eqref{sprkp5}, which is expressed in terms of the discrete right Hamiltonian function \eqref{H2d}, can be implemented by the following $s$-stage symplectic partitioned Runge--Kutta method applied to Hamilton's equations \eqref{Hameq}:
\begin{subequations}
\label{sprkp}
\begin{alignat}{2}\label{sprkp11}
q_1&=q_0+h\sum \limits_{i=1}^s b_i \frac{\partial H}{\partial p}(Q^i, P^i),\\
\label{sprkp22}
p_1&=p_0-h\sum \limits_{i=1}^s b_i \frac{\partial H}{\partial q}(Q^i, P^i),\\
\label{sprkp33}
    Q^i&=q_0+h \sum \limits_{j=1}^s a_{ij} \frac{\partial H}{\partial p}(Q^j, P^j),&\qquad i&=1,\ldots,s,\\
\label{sprkp44}
P^i&=p_0-h \sum \limits_{j=1}^s \tilde{a}_{ij} \frac{\partial H}{\partial q}(Q^j, P^j),&\qquad i&=1,\ldots,s,
\end{alignat}
\end{subequations}
where $b_i=\int_0^1 \phi_i(\tau) d\tau \neq 0$,  $a_{ij}= \int_0^{c_i}
\phi_j(\tau) d\tau $, and
$\tilde{a}_{ij}=\frac{b_i b_j -b_j a_{ji}}{b_i}$. The basis functions satisfy $\phi_i(c_j)=\delta_{ij}$, and are given by
$\phi(\cdot)=M^{-1} \psi(\cdot)$,  where
$\phi(\cdot)=[\phi_1(\cdot), \ldots, \phi_s(\cdot)]^T$, and $M$ is
defined in \eqref{invertible}.
\end{theorem}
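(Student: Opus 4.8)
The plan is to exploit the freedom in the choice of basis for $C_d^s$. The Galerkin discrete Hamiltonian \eqref{H2d}, and hence the integrator \eqref{sprkp1}--\eqref{sprkp5}, depends only on the finite-dimensional function space $C_d^s$ and the quadrature pair $(b_i,c_i)$, not on the particular basis used to coordinatize the space; the internal stages $Q^i=q_d(c_ih)$ and $P^i=p(c_ih)$ are likewise intrinsic, only the redundant parameters $V^i$ being basis-dependent. So I would first replace the basis $\psi$ throughout \eqref{sprkp1}--\eqref{sprkp5} by the interpolatory basis $\phi(\cdot)=M^{-1}\psi(\cdot)$, with $\phi_i(c_j)=\delta_{ij}$ and $M$ the matrix \eqref{invertible}, while using the quadrature \eqref{quad}. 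With this choice the structure quantities simplify: $B_i=\int_0^1\phi_i(\tau)\,d\tau=b_i$, which is \emph{exactly} the quadrature weight in \eqref{quad}; $A_{ij}=\int_0^{c_i}\phi_j(\tau)\,d\tau=a_{ij}$; and the matrix $\psi_j(c_i)$ is replaced by $\phi_j(c_i)=\delta_{ij}$. This identification $B_i=b_i$, which fails in the original basis, is the step that makes everything collapse.

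Next I would substitute these simplifications into \eqref{sprkp1}--\eqref{sprkp5}. Equation \eqref{sprkp5} becomes $0=\sum_i\phi_i(c_j)V^i-\frac{\partial H}{\partial p}(Q^j,P^j)$, which collapses to $V^j=\frac{\partial H}{\partial p}(Q^j,P^j)$ (i.e. $V^j=\dot Q^j$); feeding this into \eqref{sprkp1} and \eqref{sprkp3} yields \eqref{sprkp11} and \eqref{sprkp33} with $a_{ij}=\int_0^{c_i}\phi_j(\tau)\,d\tau$. Equation \eqref{sprkp2} is already \eqref{sprkp22}. For \eqref{sprkp4}, the interpolatory property gives $\sum_i b_iP^i\phi_j(c_i)=b_jP^j$ and $B_j=b_j$, so \eqref{sprkp4} reads $b_jP^j-p_0b_j+h\sum_i(b_ib_j-b_ia_{ij})\frac{\partial H}{\partial q}(Q^i,P^i)=0$; dividing by $b_j\neq0$, relabeling the summation index, and reading off coefficients produces \eqref{sprkp44} with $\tilde a_{ij}=(b_ib_j-b_ja_{ji})/b_i$. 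This exhibits the integrator as an $s$-stage partitioned Runge--Kutta method with $q$-tableau $(a_{ij},b_i)$ and $p$-tableau $(\tilde a_{ij},b_i)$ applied to \eqref{Hameq}.

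It then remains to note that this partitioned method is symplectic. A one-line check gives the symplecticity relations $b_i\tilde a_{ij}+b_ja_{ji}=b_ib_j$ together with equal weights in the two tableaux, which is the standard criterion (see, e.g., \cite{HaLuWa2006}) for a symplectic partitioned Runge--Kutta method; alternatively, symplecticity is automatic, since the scheme arose from a discrete right Hamiltonian through the discrete right Hamilton's equations \eqref{final}, whose flow was already shown to be symplectic.

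The main obstacle is organizational rather than analytical: one must justify carefully that passing from the basis $\psi$ to the basis $\phi$ leaves the integrator map and the intrinsic stages $Q^i,P^i$ genuinely unchanged (only $V^i$ is re-coordinatized), and one must perform the index relabeling in \eqref{sprkp4} precisely so as to land on the asserted formula for $\tilde a_{ij}$. The hypotheses actually used are exactly the two stated: $b_i\neq0$, needed to divide through in \eqref{sprkp4}, and invertibility of $M$, needed to define $\phi$.
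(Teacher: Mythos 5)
Your proposal is correct and follows essentially the same route as the paper: both hinge on passing to the interpolatory basis $\phi=M^{-1}\psi$ with $\phi_i(c_j)=\delta_{ij}$, after which $B_i=b_i$, $A_{ij}=a_{ij}$, and \eqref{sprkp5} collapses to $V^j=\frac{\partial H}{\partial p}(Q^j,P^j)$. The only difference is presentational: you justify the equivalence of the $\psi$- and $\phi$-forms of the scheme by an a priori basis-independence argument for the Galerkin extremization, whereas the paper verifies it equation by equation, in particular writing the residual of \eqref{sprkp4} as $E^{\psi}=ME^{\phi}$ and invoking invertibility of $M$ to conclude $E^{\phi}=0$.
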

 \begin{proof}
The new basis functions $\phi(\tau)$ are constructed from the original basis functions $\psi(\tau)$ by the relationship $\phi(\cdot)=M^{-1}\psi(\cdot)$. Thus, we have that $\psi(\cdot)=M\phi(\cdot)$, and in particular, $\psi_i(\tau)=\sum_{j=1}^s \psi_i(c_j) \phi_j(\tau)$. By substituting this into $B_i=\int_0^1 \psi_i(\tau ) d\tau$, equation \eqref{sprkp1} becomes
\begin{align*}
q_1&=q_0+h \sum_{i=1}^s B_i V^i \\ \nonumber &=q_0+h \sum_{i=1}^s
 V^i \int_0^1 \sum_{j=1}^s \psi_i(c_j) \phi_j(\tau) d\tau  \\
\nonumber &=q_0+h \sum_{j=1}^s \sum_{i=1}^s \psi_i(c_j)V^i \int_0^1
\phi_j(\tau) d\tau \\ \nonumber &=q_0+h \sum_{j=1}^s \frac{\partial H}{\partial p}(Q^j, P^j)
\int_0^1 \phi_j(\tau) d\tau  \\ \nonumber & \equiv q_0+h
\sum_{j=1}^s b_j \frac{\partial H}{\partial p}(Q^j, P^j),
\end{align*}
where we used equation \eqref{sprkp5} to go from the third equality to the fourth one. Similarly, by substituting $\psi_k(\tau)=\sum_{j=1}^s \psi_k(c_j)
\phi_j(\tau)$ into $A_{ik}=\int_0^{c_i} \psi_k(\tau) d\tau$ and using equation \eqref{sprkp5}, equation \eqref{sprkp3} becomes
\begin{align*}
 Q^i&=q_0+h \sum \limits_{k=1}^s A_{ik} V^k
\\ \nonumber &=q_0+h \sum_{k=1}^s V^k
\int_0^{c_i}  \sum_{j=1}^s \psi_k(c_j)\phi_j(\tau) d\tau \\
\nonumber &=q_0+h
\sum_{j=1}^s \frac{\partial H}{\partial p}(Q^j, P^j) \int_0^{c_i} \phi_j(\tau) d\tau  \\
\nonumber &\equiv q_0+h \sum_{j=1}^s a_{ij} \frac{\partial H}{\partial p}(Q^j, P^j),
\end{align*}
where $a_{ij}=\int_0^{c_i} \phi_j(\tau) d\tau.$ Note that equation \eqref{sprkp2} has the same form as
\eqref{sprkp22}, so we only have to recover equation
\eqref{sprkp44}. Let $E^{\psi}=[E^{\psi}_1, \ldots, E^{\psi}_s]^T,$
where $E^{\psi}_j\equiv \sum_{i=1}^s b_i P^i\psi_j(c_i)-p_0 B_j+h\sum_{i=1}^s(b_i B_j -b_i A_{ij}) \frac{\partial H}{\partial q}(Q^i,
P^i)=0$, which corresponds to \eqref{sprkp4}. By substituting $
B_i=\int_0^1 \psi_i(\tau ) d\tau =\int_0^1 \sum_{j=1}^s \psi_i(c_j )
\phi_j(\tau) d\tau$, and $b_i=\int_0^1 \phi_i(\tau) d\tau$ into $E^{\psi}_j$, we obtain
\begin{align*}
0&=E^{\psi}_j=\sum_{i=1}^{s}b_i P^i \psi_j(c_i)-p_0 B_j +h \sum_{i=1}^{s}(b_i B_j- b_i A_{ij}) \frac{\partial H}{\partial q}(Q^i,P^i) \\
\nonumber &=\sum_{i=1}^{s}b_i P^i \psi_j(c_i)-p_0 \int_0^1 \psi_j(\tau) d\tau+ h\sum_{i=1}^{s}\left(b_i \int_0^1 \psi_j(\tau) d\tau
 - b_i \int_0^{c_i} \psi_j(\tau) d\tau\right) \frac{\partial H}{\partial q}(Q^i,P^i) \\
\nonumber &=\sum_{i=1}^{s}b_i P^i \psi_j(c_i)-p_0 \int_0^1\sum_{i=1}^s \psi_j(c_i)\phi_i(\tau) d\tau\\
\nonumber &\qquad +h\sum_{i=1}^{s}\left(b_i \int_0^1\sum_{k=1}^s \psi_j(c_k)\phi_k(\tau) d\tau - b_i \int_0^{c_i}\sum_{k=1}^s \psi_j(c_k )\phi_k(\tau) d\tau \right)\frac{\partial H}{\partial q}(Q^i,P^i) \\
\nonumber &=\sum \limits_{i=1}^s \psi_j(c_i)\left( b_i P^i -b_ip_0   + h \sum_{k=1}^{s}(b_i b_k-b_k a_{ki} )\frac{\partial H}{\partial q}(Q^k,P^k)\right).
\end{align*}
We swapped the role of the indices $i$ and $k$ in the second to last line to obtain the final equality.
Let $E^{\phi}=[E^{\phi}_1, \ldots, E^{\phi}_s]^T,$ where $E^{\phi}_i \equiv b_i P^i -b_ip_0   + h \sum_{k=1}^{s}(b_i b_k-b_k a_{ki} ) \frac{\partial H}{\partial q}(Q^k,P^k)$. Then, the above equation can be viewed as the $j$-th component of the system of equations $M E^{\phi}
\equiv E^{\psi}=0,$ where $M=[\psi_i(c_j)]$ is invertible. Therefore, we have that $E^{\phi}=0,$ i.e., $E^{\phi}_i=b_i P^i -b_ip_0   + h
\sum_{k=1}^{s}(b_i b_k-b_k a_{ki} ) \frac{\partial H}{\partial q}(Q^k,P^k)=0$. Since $b_i \neq
0$, dividing by $b_i$ and recalling that $\tilde{a}_{ij}=\frac{b_i b_j -b_j a_{ji}}{b_i}$ yields \eqref{sprkp44}.
\end{proof}
\paragraph{\bf Comparison with Discrete Lagrangian SPRK Methods} Proposition \ref{prop:equivalence} states that for hyperregular Hamiltonians, if one chooses the same basis functions and quadrature formula, the generalized Galerkin Hamiltonian variational integrator is equivalent to the generalized Galerkin Lagrangian variational integrator. Therefore, the above theorem also applies in the Lagrangian setting. In particular, if one chooses the Lagrange polynomials associated with the quadrature nodes $c_i$ as our choice of basis functions $\psi_i(\tau)$, then the coefficients of the SPRK method derived above agree with the method derived in \cite{MaWe2001} using discrete Lagrangians. However, our approach remains valid in the case of degenerate Hamiltonians, for which it is impossible to obtain a Lagrangian and apply the method in \cite{MaWe2001} to derive Hamiltonian variational integrators.

The derivation on p.\,209 of the book \cite{HaLuWa2006}, which is analogous to the result in \cite{Su1990}, generalizes  the approach in \cite{MaWe2001} by considering discrete Lagrangian SPRK methods without the restriction that the Runge--Kutta coefficients are obtained from integrals of Lagrange polynomials. It is however unclear how one should choose these coefficients. In contrast, our approach provides a systematic means of deriving the coefficients by an appropriate choice of basis functions and quadrature formula. Our discrete Hamiltonian method is expressed in terms of Type II generating functions and the continuous Hamiltonian as opposed to the discrete Lagrangian approach based on Type I generating functions and the continuous Lagrangian.\\

\paragraph{\bf Discrete Hamiltonian associated with Galerkin SPRK Method.} For the symplectic partitioned Runge--Kutta method \eqref{sprkp} described above, we can explicitly compute the corresponding Type II generating function $H_d^+(q_0,p_1)$ given in \eqref{H2d} as follows,
\begin{align}
H_d^+(q_0, p_1)&= p_1 q_d(h)-h\sum_{i=1}^s
b_i\left[P^i \dot{q}_d(c_i h)  - H(Q^i, P^i)\right] \\
\nonumber &=p_1 \left(q_0+h\sum_{i=1}^s b_i \frac{\partial H}{\partial p}(Q^i, P^i)\right)-h\sum_{i=1}^s
b_i\left[P^i \frac{\partial H}{\partial p}(Q^i, P^i) - H(Q^i, P^i)\right] \\ \nonumber &= p_1
q_0+h\sum_{i=1}^s b_i (p_1-P^i)\frac{\partial H}{\partial p}(Q^i, P^i)+h\sum_{i=1}^s b_i
H(Q^i, P^i) \\ \nonumber &=p_1q_0 -h^2 \sum_{i, j=1}^s b_i a_{ij}
\frac{\partial H}{\partial q}(Q^i, P^i) \frac{\partial H}{\partial p}(Q^j, P^j)+h\sum_{i=1}^s b_i H(Q^i, P^i).
\end{align}
This Type II generating function is consistent with the Type I generating function for SPRK methods that was given in Theorem 5.4 on p.\,198 of \cite{HaLuWa2006}.\\

\paragraph{\bf Sufficient Condition for Consistency of the SPRK Method.} If the constant function $f(x)=1$ is in the finite-dimensional function space $C_d^s$, then by interpolation, we have that $1=\sum_{i=1}^s f(c_i) \phi_i(\tau)=\sum_{i=1}^s \phi_i(\tau)$. Thus, $\sum_{i=1}^s b_i=\int_0^1
\sum_{i=1}^s \phi_i(\tau) d\tau=1$. Partitioned Runge--Kutta order theory~\cite{Bu2008} states that the condition $\sum_{i=1}^s b_i=1$ implies that the variational integrator \eqref{sprkp} is at
least first-order. Therefore, to obtain a consistent method, it is sufficient that the constant function is in the span of the basis functions we choose. In particular, if we let $\psi_1(\tau)=1$, we ensure that our method is at least first-order.\\

\paragraph{\bf Construction of the SPRK Tableau.}
Let the symplectic partitioned Runge--Kutta method \eqref{sprkp} be denoted by the tableau.

\centerline{
\begin{tabular}{c|c c c}
   $c_1$ & $a_{11}$ & $\cdots$ & $a_{1s}$ \\
$ \vdots$ & $\vdots$ & & $\vdots$\\
$c_s$ & $a_{s1} $& $\cdots$ & $a_{ss}$\\
 \hline
& $ b_1$ & $\cdots$ &$b_s $\\
 \end{tabular}
\qquad\quad
\begin{tabular}{c|c c c}
   $\tilde{c}_1$ & $\tilde{a}_{11}$ & $\cdots$ & $\tilde{a}_{1s}$ \\
$ \vdots$ & $\vdots$ & & $\vdots$\\
$\tilde{c}_s$ & $\tilde{a}_{s1} $& $\cdots$ & $\tilde{a}_{ss}$\\
 \hline
& $ b_1$ & $\cdots$ &$b_s $\\
 \end{tabular}
}
\vspace*{2ex}

Based on the above generalized Galerkin method, the coefficients in the partitioned Runge--Kutta tableau can be constructed in the following systematic way:

\begin{boxedminipage} [t]{15cm}
{\bf Generalized Galerkin Hamiltonian SPRK Method.}
\begin{enumerate}
  \item Choose a basis set $\psi_i(\tau), \tau \in [0,1], i=1, \ldots, s$, with
  $\psi_1(\tau)=1.$
  \item Choose quadrature points $c_i, i=1,
  \ldots, s.$ Ensure that $M=[\psi_i(c_j)] $ is invertible.
  \item Let the column vector $b=[b_1, b_2, \ldots, b_s]^T$ contain the coefficients in the SPRK tableau. There
are two ways to obtain $b$:
\begin{enumerate}
\item Compute
$B_i=\int_0^1 \psi_i(\tau) d\tau$ and let $B=[B_1, B_2, \ldots,
B_s]^T.$  Then, $b=M^{-1} B.$

\item Compute a new basis set $\phi_i(\tau)$ by using the relation $\phi(\tau)=M^{-1} \psi(\tau)$,
then compute $ b=[b_1, b_2, \ldots, b_s]^T$ by $b_i=\int_0^1
\phi_i(\tau) d\tau.$
\end{enumerate}

  \item Let the matrix $A^{\phi}=[a_{ij}]$ contain the coefficients of the SPRK tableau. As before, there are two way to obtain $A^\phi$:
\begin{enumerate}
\item Compute coefficients $A^{\psi}=[A_{ij}]$, where $A_{ij}=\int_0^{c_i} \psi_j(\tau) d\tau$. Then,  the matrix is given by $A^{\phi}=[a_{ij}]= A^{\psi}M^{-T}.$
\item
   Compute $A^{\phi}=[a_{ij}]$, where $a_{ij}=\int_0^{c_i} \phi_j(\tau)
   d\tau$, directly by using the new basis functions $\phi(\cdot)=M^{-1}\psi(\cdot)$.
\end{enumerate}
\item Compute the coefficients
$\tilde{A}^{\phi}=[\tilde{a}_{ij}]$ by using
$\tilde{a}_{ij}=\frac{b_i b_j -b_j a_{ji}}{b_i}.$
\end{enumerate}
\end{boxedminipage}

\subsection{Examples}

In this subsection, we will consider four examples to illustrate the
above procedure for constructing variational integrators.

\begin{example}
We consider one-stage methods. Choose the basis function $\psi_1=1$, then for any quadrature
point $c_1$, the matrix $M=[\psi_1(c_1)]=[\,1\,]$ is invertible.
\renewcommand{\theenumi}{\roman{enumi}.}
\renewcommand{\labelenumi}{\theenumi}
\begin{enumerate}
  \item If $c_1=0,$ then $b_1=1, a_{11}=0,
\tilde{a}_{11}=1,$ which is the symplectic Euler method.
  \item  If $c_1=\frac{1}{2},$ then $b_1=1, a_{11}=\frac{1}{2},
\tilde{a}_{11}=\frac{1}{2},$ which is the midpoint rule.
  \item If $c_1=1,$ then $b_1=1, a_{11}=1,
\tilde{a}_{11}=0,$ which is the adjoint symplectic Euler method.
\end{enumerate}
\end{example}

\begin{example}
Choose basis functions $\psi_1=1,  \psi_2=\cos(\pi \tau).$ If we choose
quadrature points $c_1=0, c_2=1,$ then we obtain
\begin{align*}
M=\begin{bmatrix}
\psi_1(c_1) & \psi_1(c_2) \\
\psi_2(c_1) & \psi_2(c_2) \\
\end{bmatrix}
= 
\begin{bmatrix}
1 & 1 \\
1 & -1 \\
\end{bmatrix}
\quad\text{and}\quad
M^{-1}=\frac{1}{2}\begin{bmatrix}
1 & 1 \\
1 & -1 \\
\end{bmatrix}.
\end{align*}
One can easily compute
\begin{align*}
B_1=\int_0^1 \psi_1(\tau) d\tau =1,\qquad B_2=\int_0^1 \psi_1(\tau)
d\tau =0,
\end{align*}
Therefore we get $b=[b_1, b_2]^T=M^{-1} B=[\frac{1}{2},
\frac{1}{2}]^T$, which is the trapezoidal rule. We also compute
\begin{align*}
A^{\psi}=\begin{bmatrix}
\int_0^{c_1} \psi_1(\tau) d \tau & \int_0^{c_1} \psi_2(\tau) d \tau \\
\int_0^{c_2} \psi_1(\tau) d \tau & \int_0^{c_2} \psi_2(\tau) d \tau \\
\end{bmatrix}
= 
\begin{bmatrix}
0 & 0 \\
1 & 0 \\
\end{bmatrix}.
\end{align*}
Therefore, the matrix
\begin{align*}
A^{\phi}=\begin{bmatrix}
\int_0^{c_1} \phi_1(\tau) d \tau & \int_0^{c_1} \phi_2(\tau) d \tau \\
\int_0^{c_2} \phi_1(\tau) d \tau & \int_0^{c_2} \phi_2(\tau) d \tau \\
\end{bmatrix}
= A^{\psi} M^{-T}=
\begin{bmatrix}
0 & 0 \\
\frac{1}{2} & \frac{1}{2} \\
\end{bmatrix},
\end{align*}
 By using the relationship
$\tilde{a}_{ij}=\frac{b_i b_j -b_j a_{ji}}{b_i},$ one obtains
\begin{align*}
\tilde{A}^{\phi}=[\tilde{a}_{ij}]=
\begin{bmatrix}
0 & 0 \\
\frac{1}{2} & \frac{1}{2}
\end{bmatrix}.
\end{align*}
Thus, we obtain the St\"{o}rmer--Verlet method. Interestingly, the St\"{o}rmer--Verlet method is typically derived as a variational integrator by using linear interpolation, i.e., $\psi_1=1, \psi_2=\tau$, and the trapezoidal rule.
\end{example}
\begin{example}
If we choose basis functions $\psi_1=1, \psi_2=\cos(\pi \tau),
\psi_3=sin(\pi \tau)$ and quadrature points $c_1=0,
c_2=\frac{1}{2}, c_3=1$, we obtain a new method which is
second-order accurate, and the coefficients of the SPRK method are given by

\centerline{
\begin{tabular}{c|c c c}
$\bigstrut 0$ & $0$ & $0$ & $0$ \\
$\bigstrut \frac{1}{2}$ & $\frac{1}{4} $& $\frac{1}{\pi}$ & $\frac{\pi-4}{4 \pi}$ \\
$\bigstrut 1$& $\frac{\pi-2}{2 \pi}$ &$ \frac{2}{\pi}$ & $\frac{\pi-2}{2 \pi}$\\
\hline
& $\bigstrut \frac{\pi-2}{2 \pi}$ & $\frac{2}{\pi}$ &$\frac{\pi-2}{2 \pi} $\\
\end{tabular} \qquad\quad
\begin{tabular}{c|c c c}
$\bigstrut \frac{\pi^2-2\pi-4}{2\pi^2-4\pi}$ & $\frac{\pi-2}{2\pi}$ & $\frac{\pi-4}{\pi^2-2 \pi}$ & $0$ \\
$\bigstrut \frac{1}{2}$ & $\frac{\pi-2}{2\pi} $& $\frac{1}{\pi}$ & $0$ \\
$\bigstrut \frac{\pi^2-2\pi+4}{2\pi^2-4\pi}$& $\frac{\pi-2}{2 \pi}$ &$ \frac{1}{\pi-2}$ & $0$\\
\hline
& $\bigstrut \frac{\pi-2}{2 \pi}$ & $\frac{2}{\pi}$ &$\frac{\pi-2}{2 \pi} $\\
\end{tabular}
 }
 \end{example}
\begin{example}
Chebyshev quadrature (see p.\,415 of \cite{Hi1974}) is designed to approximate weighted integrals of the form
\begin{align*}
\int_{-1}^1 f(x) w(x) dx = b\sum_{i=1}^s f(x_i) +E[f(x)],
\end{align*}
with an equally weighted sum of the function values at the quadrature points $x_i$, and an error term $E[f(x)]$. The weight $b$ is chosen so that the quadrature is exact for $f(x)=1$, i.e., $b=\frac{1}{s} \int_{-1}^1 w(x)dx$. We are primarily interested in the case when the weight function $w(x)=1$, in which case the quadrature formula becomes
\begin{align*}
\int_{-1}^1 f(x)dx=\frac{2}{s}\sum_{i=1}^s f(x_i) +E[f(x)],
\end{align*}
where the quadrature points $x_i$  are given by the roots of polynomials (see p.\,418 of \cite{Hi1974}), the first three of which are given by 
\begin{align}\label{Gpoly}
G_0(x)=1,\quad
G_1(x)=x,\quad
G_2(x)=\frac{1}{3}(3x^2-1),\quad
G_3(x)=\frac{1}{2}(2x^3-x).
\end{align}
The error term associated with the $s$-point formula is given by
\begin{align*}
E=\begin{cases}
e_s \frac{f^{(s+1)}(\xi)}{(s+1)!} & s \text{ odd},\\
e_s \frac{f^{(s+2)}(\xi)}{(s+2)!} & s \text{ even},
\end{cases}
\qquad
\text{where}
\qquad
e_s=\begin{cases}
\int_{-1}^1 xG_s(x)dx & s \text{ odd},\\
\int_{-1}^1 x^2G_s(x)dx & s \text{ even}.
\end{cases}
\end{align*}
The error term implies that the quadrature has degree of precision $s$ for odd
$s$ and degree of precision $s+1$ for even $s$.
Note that the roots $x_i$ of the polynomials $G_i$ are in the interval $[-1, 1],$ so after a change of coordinates, we obtain quadrature points $c_i$ in the interval $[0,1]$.
Then, we use Lagrange polynomials associated with these quadrature points to construct variational integrators for $s=1, 2, 3$.
\renewcommand{\theenumi}{\roman{enumi}.}
\renewcommand{\labelenumi}{\theenumi}
\begin{enumerate}
\item one-stage, second-order method\\

\centerline{
\begin{tabular}{c|c  }
$\bigstrut \frac{1}{2}$ & $\frac{1}{2}$   \\
\hline
&$\bigstrut 1$
\end{tabular}
\qquad \quad
\begin{tabular}{c|c  }
$\bigstrut \frac{1}{2}$ & $\frac{1}{2}$   \\
\hline
&$\bigstrut 1$
\end{tabular}
}
\
\item two-stage, fourth-order method\\

\centerline{
\begin{tabular}{c|c c }
$\bigstrut \frac{1}{2}-\frac{\sqrt{3}}{6}$ & $\frac{1}{4}$ & $\frac{1}{4}-\frac{\sqrt{3}}{6}$  \\
$\bigstrut \frac{1}{2}+\frac{\sqrt{3}}{6}$& $\frac{1}{4}+\frac{  \sqrt{3}}{6}$ & $\frac{1}{4}$\\
\hline
& $\bigstrut \frac{1}{2}$ & $\frac{1}{2}$
\end{tabular}
\qquad\quad
\begin{tabular}{c|c c }
$\bigstrut \frac{1}{2}-\frac{\sqrt{3}}{6}$ & $\frac{1}{4}$ & $\frac{1}{4}-\frac{\sqrt{3}}{6}$  \\
$\bigstrut \frac{1}{2}+\frac{\sqrt{3}}{6}$& $\frac{1}{4}+\frac{  \sqrt{3}}{6}$ & $\frac{1}{4}$\\
\hline
& $\bigstrut \frac{1}{2}$ & $\frac{1}{2}$
\end{tabular}
}

\item three-stage, fourth-order method\\

\centerline{
\begin{tabular}{c|c c c}
$\bigstrut \frac{1}{2}-\frac{\sqrt{2}}{4}$ & $\frac{1}{6}+\frac{\sqrt{2}}{48}$ & $\frac{1}{6}-\frac{\sqrt{2}}{6}$ & $\frac{1}{6}-\frac{ 5\sqrt{2}}{48}$ \\
$\bigstrut \frac{1}{2}$ & $\frac{1}{6}+\frac{\sqrt{2}}{8}$ &$ \frac{1}{6}$ & $\frac{1}{6}-\frac{\sqrt{2}}{8}$\\
$\bigstrut \frac{1}{2}+\frac{\sqrt{2}}{4}$& $\frac{1}{6}+\frac{ 5 \sqrt{2}}{48}$ & $\frac{1}{6}+\frac{\sqrt{2}}{6}$ &$ \frac{1}{6}-\frac{5 \sqrt{2}}{48}$\\
\hline
&$\bigstrut \frac{1}{3}$ & $\frac{1}{3}$ &$\frac{1}{3}$
\end{tabular}
\qquad \quad
\begin{tabular}{c|c c c}
$\bigstrut \frac{1}{2}-\frac{\sqrt{2}}{4}$ & $\frac{1}{6}-\frac{\sqrt{2}}{48}$ & $\frac{1}{6}-\frac{\sqrt{2}}{8}$ & $\frac{1}{6}-\frac{ 5\sqrt{2}}{48}$ \\
$\bigstrut \frac{1}{2}$ & $\frac{1}{6}+\frac{\sqrt{2}}{6}$ &$ \frac{1}{6}$ & $\frac{1}{6}-\frac{\sqrt{2}}{6}$\\
$\bigstrut \frac{1}{2}+\frac{\sqrt{2}}{4}$& $\frac{1}{6}+\frac{ 5 \sqrt{2}}{48}$ & $\frac{1}{6}+\frac{\sqrt{2}}{8}$ &$ \frac{1}{6}+\frac{\sqrt{2}}{48}$\\
\hline
&$\bigstrut \frac{1}{3}$ & $\frac{1}{3}$ &$\frac{1}{3}$
\end{tabular}
}

\end{enumerate}
For $s=1,2$, we obtain the same methods as the ones obtained using Gauss--Legendre quadrature, which are the midpoint rule, and the two-stage, fourth-order method, respectively. For $s=3$, we obtain a three-stage SPRK that is fourth-order. The order of the SPRK methods above were determined using partitioned Runge--Kutta order theory~\cite{Bu2008}.

\end{example}
\section{Momentum Preservation and Invariance of the Discrete Right Hamiltonian
Function}\label{sec:momentum_preservation}
\paragraph{\bf Momentum Maps.} First, we recall the definition of a momentum map defined
on $T^*Q$ given in \cite{AbMa1978}.

\begin{definition}\label{def3}
Let $(P, \omega)$ be a connected symplectic manifold and $\Phi:
G\times P\rightarrow P$ be a symplectic action of the Lie group $G $ on
P, i.e., for each $g\in G$, the map $\Phi_g: P  \rightarrow P$;
$x\mapsto \Phi(g, x)$ is symplectic. We say that a map $J:
P\rightarrow \mathfrak{g}^*$, where $\mathfrak{g}^*$ is a dual space of the
Lie algebra $\mathfrak{g}$ of G, is a momentum map for the action $\Phi$
if for every $\xi \in \mathfrak{g},$ $d\hat{J}(\xi)=i_{\xi_P}
\omega$, where $\hat{J}(\xi): P \rightarrow \mathbb R$ is defined by
$\hat{J}(\xi)(x)=J(x)\cdot \xi$, and $\xi_P$ is the infinitesimal
generator of the action corresponding to $\xi.$ In other words, $J$
is a momentum map provided $X_{\hat{J}(\xi)}=\xi_P$ for all $\xi \in
\mathfrak{g}$.
\end{definition}
For our purposes, we are interested in the case $P=T^*Q$, and $\omega=dq^i \wedge dp_i$ is the canonical symplectic two-form on $T^*Q$. This gives a momentum map of the form $J: T^*Q
\rightarrow \mathfrak{g}^*$, and we describe the construction given in Theorem 4.2.10 of \cite{AbMa1978}. Notice that $\omega$ is exact, since $\omega=-d \theta=-d(p_i dq^i)$. Consider an action $\Phi_g$ that leaves the Lagrange one-form $\theta$ invariant, i.e., $\Phi_g ^* \theta =\theta$ for all $g\in G$. Then, the momentum map $J: T^*Q \rightarrow \mathfrak{g}^*$ is given by
\begin{align}\label{defmom1}
 J(x)\cdot
\xi=i_{\xi_P} \theta(x).
\end{align}
We can show that this satisfies the definition of the momentum map given above by using the fact that $\Phi_g$ leaves the one-form  $\theta$ invariant for all $g\in G$, and
$\xi_P$ is the infinitesimal generator of the action corresponding
to $\xi$. This implies that the Lie derivative of $\theta$ along the vector field $\xi_P$ vanishes, i.e., $\pounds_{\xi_P}\theta=0$, for all $\xi\in\mathfrak{g}$. By Cartan's magic formula,
$$0=\pounds_{\xi_P}\theta=i_{\xi_P} d\theta + d i_{\xi_P} \theta,$$
therefore, $d i_{\xi_P} \theta=-i_{\xi_P} d\theta=i_{\xi_P} \omega$.
As such, $\hat{J}(\xi)(x)=i_{\xi_P} \theta$ satisfies the defining property, $d\hat{J}(\xi)=i_{\xi_P}
\omega$, of a momentum map. Then, $J(x)\cdot \xi=\hat{J}(\xi)(x)=i_{\xi_P} \theta(x)$. Moreover,  by Theorem 4.2.10 of \cite{AbMa1978}, this momentum map is $\operatorname{Ad}^*$-equivariant.

Let $\Phi:G\times Q\rightarrow Q$ be an action of the Lie group $G$ on $Q$. We will give the coordinate expression for the cotangent lifted action $\Phi^{T^*Q}$. In coordinates, we denote $\Phi_{g^{-1}}: Q\rightarrow Q$ by
 $q^i=\Phi_{g^{-1}}^i(Q)$, then its cotangent lifted action $\Phi^{T^*Q}: G\times T^*Q \rightarrow T^*Q$ is given by
\begin{align}\label{defcotgroup}
\Phi^{T^*Q}(g,q,p)=T^*\Phi_{g^{-1}}(q, p)=\left(\Phi_g^i(q), p_j \frac{\partial q^j}{\partial \Phi_g^i(q)}\right),
 \end{align}
where $T^*\Phi_{g^{-1}}$ means the cotangent lift of the action
$\Phi_{g^{-1}}$.

 In the following proposition, we give the coordinate expression for the cotangent lifted action, and show that it leaves the Lagrange one-form $\theta=p_i dq^i $ invariant.

 \begin{proposition}\label{prop1}
Given an action $\Phi:G\times Q$ of a Lie group $G$ on $Q$, the cotangent lifted action $\Phi^{T^*Q}:G\times T^*Q\rightarrow T^*Q$ leaves the Lagrange one-form
$\theta=p_i dq^i$ invariant.
 \end{proposition}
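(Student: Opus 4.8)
The plan is to verify the stated invariance $(\Phi_{g}^{T^{*}Q})^{*}\theta = \theta$ by a direct computation in coordinates, using the coordinate formula \eqref{defcotgroup} for the cotangent lifted action. Write $\bar{q}^{i} = \Phi_{g}^{i}(q)$ for the transformed base coordinates, so that conversely $q^{j} = \Phi_{g^{-1}}^{j}(\bar{q})$, and let $\bar{p}_{i}$ denote the transformed fibre coordinates. With this notation, \eqref{defcotgroup} asserts that $\Phi_{g}^{T^{*}Q}$ is the map $(q,p) \mapsto (\bar{q},\bar{p})$ with
\[
\bar{p}_{i} = p_{j}\,\frac{\partial q^{j}}{\partial \bar{q}^{i}},
\]
where the Jacobian $\partial q^{j}/\partial \bar{q}^{i}$ is that of $\Phi_{g^{-1}}$, evaluated along $q = \Phi_{g^{-1}}(\bar{q})$.

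First I would record that, since each $q^{j}$ is a smooth function of $\bar{q}$, the chain rule gives $dq^{j} = (\partial q^{j}/\partial \bar{q}^{i})\, d\bar{q}^{i}$. Substituting this together with the expression for $\bar{p}_{i}$ into $\theta = p_{i}\,dq^{i}$ written in the new coordinates yields
\[
(\Phi_{g}^{T^{*}Q})^{*}\theta \;=\; \bar{p}_{i}\, d\bar{q}^{i} \;=\; p_{j}\,\frac{\partial q^{j}}{\partial \bar{q}^{i}}\, d\bar{q}^{i} \;=\; p_{j}\, dq^{j} \;=\; \theta,
\]
which is precisely the claimed invariance of the Lagrange one-form. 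Applying $d$ to both sides shows in addition that $\Phi_{g}^{T^{*}Q}$ preserves the canonical symplectic form $\omega = -d\theta$, so that the cotangent lifted action is a symplectic action in the sense of Definition \ref{def3}; this is the property that will actually be used in the subsequent momentum map construction.

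I do not expect a genuine obstacle here; the only thing requiring care is the bookkeeping of independent variables — one must remember that $\Phi_{g}^{T^{*}Q}$ is the map $(q,p)\mapsto(\bar{q},\bar{p})$, that $\partial q^{j}/\partial\bar{q}^{i}$ is literally the Jacobian matrix of $\Phi_{g^{-1}}$ (inverse to that of $\Phi_{g}$), and that pulling back a one-form amounts to substituting the new coordinates and applying the chain rule. For a chart-independent alternative, one may instead use the tautological characterization $\theta_{\alpha_{q}}(v) = \langle \alpha_{q},\, T\pi_{Q}(v)\rangle$ for $v \in T_{\alpha_{q}}(T^{*}Q)$, where $\pi_{Q}\colon T^{*}Q \to Q$ is the cotangent bundle projection; combined with the relations $\pi_{Q}\circ\Phi_{g}^{T^{*}Q} = \Phi_{g}\circ\pi_{Q}$ and $(T\Phi_{g})^{*}\circ\Phi_{g}^{T^{*}Q} = \mathrm{id}$ on fibres, this gives $(\Phi_{g}^{T^{*}Q})^{*}\theta = \theta$ directly, and in fact shows the statement holds for the cotangent lift of an arbitrary diffeomorphism of $Q$.
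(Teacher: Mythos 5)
Your proof is correct and follows essentially the same route as the paper: a direct coordinate computation showing $\bar{p}_i\,d\bar{q}^i = p_j\,dq^j$ using the coordinate formula \eqref{defcotgroup}, with the only cosmetic difference being that you absorb the product of inverse Jacobians via the chain rule identity $dq^j = (\partial q^j/\partial\bar{q}^i)\,d\bar{q}^i$ rather than writing out both Jacobian factors explicitly. The closing remarks on symplecticity and the tautological characterization of $\theta$ are sound but not part of the paper's argument.
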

\begin{proof}
Given $g\in G$, let the cotangent lifted action of $g$ on $(q,p)$ be denoted by $(Q,P)=\Phi^{T^*Q}_g(q, p)$, the components of which are given by $Q^i=\Phi_g^i(q)$
 and  $P_i=p_j \frac{\partial q^j}{\partial \Phi_g^i(q)}$. Then, a direct computation yields
\begin{align}
P_i dQ^i=P_i d\Phi_g^i(q)= p_j \frac{\partial q^j}{\partial \Phi_g^i(q)}\frac{\partial \Phi_g^i(q)}{\partial q^j} dq^j =p_j dq^j.
\end{align}
This shows that $\Phi_g^{T^*Q}$ leaves the Lagrange one-form $p_i dq^i $ invariant.
\end{proof}
Corresponding to the cotangent lift action $\Phi^{T^*Q},$ for
every $\xi \in \mathfrak{g}$, the momentum map $J: T^*Q \rightarrow
\mathfrak{g}^*$ defined in \eqref{defmom1} has the following explicit expression in
coordinates,
\begin{align}
J(\alpha_q)\cdot
\xi=i_{\xi_P} (p_i dq^i)(\alpha_q)=p\cdot \xi_Q(q)=p\cdot
\left.\frac{d}{d\epsilon}\right|_{\epsilon=0} \Phi_{\exp(\epsilon \xi)}(q^i),
\end{align}
where $\alpha_q=(q, p) \in T^*Q$.

\paragraph{\bf Discrete Noether's Theorem.}
In the discrete case, consider the one-step discrete flow map $F_{H_d^+}: (q_0,
p_0) \mapsto (q_1, p_1)$ defined by the discrete right
Hamilton's equations,
\begin{align}\label{deffd}
p_0=D_1 H_d^+(q_0, p_1),\qquad q_1=D_2 H_d^+(q_0, p_1).
\end{align}
We will show that if the generalized discrete Lagrangian, $R_d(q_0, q_1, p_1) = p_1 q_1 - H_d^+(q_0,p_1)$, is invariant under the cotangent lifted action, then we have discrete momentum preservation, which is the discrete analogue of Noether's theorem for discrete Hamiltonian mechanics.
\begin{theorem}\label{discrete_noether}
Let $\Phi^{T^*Q}$ be  the cotangent lift action of  the action
$\Phi$ on the configuration manifold $Q$.  If the
generalized discrete Lagrangian $R_d(q_0, q_1, p_1)=p_1 q_1 -H_d^+(q_0, p_1)$ is invariant under the cotangent
lifted action $\Phi^{T^*Q},$ then the discrete flow map of the discrete right Hamilton's equations preserves the momentum map, i.e., $F_{H_d^+}^* J=J$.
\end{theorem}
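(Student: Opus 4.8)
The plan is to follow the standard route to Noether-type theorems: take the infinitesimal version of the invariance hypothesis and then use the discrete right Hamilton's equations \eqref{deffd} to collapse the resulting identity. Fix an arbitrary Lie algebra element $\xi\in\mathfrak{g}$ and consider the one-parameter subgroup $\epsilon\mapsto\exp(\epsilon\xi)$. The cotangent lifted action $\Phi^{T^*Q}$ induces an action on triples $(q_0,q_1,p_1)$: since $\Phi^{T^*Q}$ covers $\Phi$ (by Proposition \ref{prop1}), $q_0$ and $q_1$ are sent to $\Phi_g(q_0)$ and $\Phi_g(q_1)$ respectively, while $p_1$ is sent to the fiber component of $\Phi^{T^*Q}_g(q_1,p_1)$. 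The hypothesis that $R_d(q_0,q_1,p_1)=p_1 q_1-H_d^+(q_0,p_1)$ is invariant means that $R_d$ is constant along the orbits of this action.

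Differentiating the invariance at $\epsilon=0$, and writing $\xi_Q$ for the infinitesimal generator of $\Phi$ on $Q$, gives the identity
$$D_1 R_d(q_0,q_1,p_1)\cdot\xi_Q(q_0)+D_2 R_d(q_0,q_1,p_1)\cdot\xi_Q(q_1)+D_3 R_d(q_0,q_1,p_1)\cdot\eta=0,$$
where $\eta$ is the fiber component at $(q_1,p_1)$ of the infinitesimal generator of $\Phi^{T^*Q}$; notably, no explicit formula for $\eta$ will be needed. Next I would evaluate the slot derivatives of $R_d$ along a solution $(q_0,p_0)\mapsto(q_1,p_1)$ of the discrete right Hamilton's equations \eqref{deffd}. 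From $R_d=p_1 q_1-H_d^+(q_0,p_1)$ one computes $D_1 R_d=-D_1 H_d^+(q_0,p_1)=-p_0$, $D_2 R_d=p_1$, and $D_3 R_d=q_1-D_2 H_d^+(q_0,p_1)=0$, using \eqref{deffd} to identify $D_1 H_d^+(q_0,p_1)=p_0$ and $D_2 H_d^+(q_0,p_1)=q_1$. Hence the $D_3 R_d$ term vanishes identically, and the identity reduces to $p_1\cdot\xi_Q(q_1)=p_0\cdot\xi_Q(q_0)$. By the coordinate expression $J(q,p)\cdot\xi=p\cdot\xi_Q(q)$ for the momentum map, this says $J(q_1,p_1)\cdot\xi=J(q_0,p_0)\cdot\xi$; since $\xi$ was arbitrary, $J(q_1,p_1)=J(q_0,p_0)$, which is exactly $F_{H_d^+}^*J=J$.

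The step I expect to be delicate is making sure the fiber-direction term genuinely drops out. This is precisely why one phrases the hypothesis in terms of the generalized discrete Lagrangian $R_d$ rather than $H_d^+$ directly: the relation $D_3 R_d=q_1-D_2 H_d^+(q_0,p_1)=0$ on solutions is what annihilates the $\eta$ contribution and renders the (somewhat awkward) transformation law for the momentum $p_1$ under the cotangent lift irrelevant. The remaining care is purely bookkeeping — correctly pairing the covectors $D_i R_d$ with the tangent vectors $\xi_Q(q_i)$, and using Proposition \ref{prop1} so that the base components of the lifted action are just $\Phi_g(q_0)$ and $\Phi_g(q_1)$. One should also note that the infinitesimal invariance identity is being applied at points $(q_0,q_1,p_1)$ lying along a discrete trajectory, which is legitimate since $R_d$ is assumed invariant on its whole domain.
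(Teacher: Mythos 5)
Your proposal is correct and follows essentially the same route as the paper: differentiate the invariance of $R_d$ along the one-parameter subgroup generated by $\xi$, substitute the discrete right Hamilton's equations $D_1H_d^+(q_0,p_1)=p_0$ and $D_2H_d^+(q_0,p_1)=q_1$ so that the fiber-direction contribution cancels, and read off $p_1\cdot\xi_Q(q_1)=p_0\cdot\xi_Q(q_0)$. Your packaging of the cancellation as $D_3R_d=q_1-D_2H_d^+(q_0,p_1)=0$ is just a tidier way of stating the same cancellation the paper performs term by term.
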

\begin{proof}
In coordinates, let $(q_0^\epsilon, p_0^\epsilon):=\Phi^{T^*Q}_{\exp(\epsilon\xi)}(q_0,
p_0)$ and $(q_1^\epsilon, p_1^\epsilon):=\Phi^{T^*Q}_{\exp(\epsilon\xi)}(q_1, p_1)$.
From the invariance of $p_1 q_1 -H_d^+(q_0,
p_1),$ we have that
\begin{align}\label{dismom}
0&= \left.\frac{d}{d\epsilon}\right|_{\epsilon=0} \{p_1^\epsilon q_1^\epsilon- H_d^+(q_0^\epsilon, p_1^\epsilon)  \}\\ \nonumber
&=p_1   \left.\frac{d}{d\epsilon}\right|_{\epsilon=0} q_1^\epsilon
          +q_1   \left.\frac{d}{d\epsilon}\right|_{\epsilon=0} p_1^\epsilon
          -D_1 H_d^+(q_0, p_1)\left.\frac{d}{d\epsilon}\right|_{\epsilon=0} q_0^\epsilon
          -D_2 H_d^+(q_0, p_1)\left.\frac{d}{d\epsilon}\right|_{\epsilon=0} p_1^\epsilon \\ \nonumber
&=p_1  \left.\frac{d}{d\epsilon}\right|_{\epsilon=0} q_1^\epsilon
          +q_1   \left.\frac{d}{d\epsilon}\right|_{\epsilon=0} p_1^\epsilon
          -p_0\left.\frac{d}{d\epsilon}\right|_{\epsilon=0} q_0^\epsilon
          -q_1\left.\frac{d}{d\epsilon}\right|_{\epsilon=0} p_1^\epsilon \\ \nonumber
&=p_1  \left.\frac{d}{d\epsilon}\right|_{\epsilon=0} \Phi_{\exp(\epsilon \xi)}(q_1)
       - \left. p_0\frac{d}{d\epsilon}\right|_{\epsilon=0} \Phi_{\exp(\epsilon \xi)}(q_0)\\ \nonumber
&= p_1\cdot \xi_Q(q_1) - p_0\cdot \xi_Q(q_0),
\end{align}
where we used the discrete right Hamilton's equations \eqref{deffd} in going from the second to the third line, and $q_0^\epsilon=\Phi_{\exp(\epsilon\xi)}(q_0)$ and  $q_1^\epsilon=\Phi_{\exp(\epsilon \xi)}(q_1)$ are used in going from the third to the fourth line. Then, by the definition of $F_{H_d^+}$ and $J$, \eqref{dismom} states that $F_{H_d^+}^* J=J$.
\end{proof}

\paragraph{\bf $G$-invariant generalized discrete Lagrangians from $G$-equivariant interpolants.}
We now provide a systematic means of constructing a discrete Hamiltonian, so that the generalized discrete Lagrangian $R_d(q_0, q_1, p_1)=p_1 q_1-H_d^+(q_0,p_1)$ is $G$-invariant, provided that the generalized Lagrangian $R(q,\dot{q},p)=p\dot{q}-H(q,p)$ is $G$-invariant.

Our construction will be based on an interpolatory function $\varphi:Q^r\rightarrow C^2([0,h],Q)$, that is parameterized by $r+1$ internal points $q^\nu\in Q$, defined at the times $0=d_0 h <d_1 h<\cdots<d_r h\leq h$, i.e., $\varphi\left(d_\eta h; \{q^\nu\}_{\nu=0}^r\right)=q^\eta$. We also use a numerical quadrature formula given by quadrature weights $b_i$ and quadrature points $c_i$. We denote the momentum at the time $c_i h$ by $p^i$. Then, we construct the following discrete Hamiltonian,
\begin{equation}\label{eq:G_inv_H_d}
H_d^+(q_0, p_1)=\ext_{\substack{q^\nu\in Q, p^i\in Q^*\\q^0=q_0}} \left[p_1\cdot \varphi\left(h;\{q^\nu\}_{\nu=0}^r\right)-\sum_{i=1}^s b_i R\left(T\varphi\left(c_i h;\{q^\nu\}_{\nu=0}^r\right),p^i\right)\right],
\end{equation}
where $R(q,\dot{q},p)=p\dot{q}-H(q,p)$. An interpolatory function is $G$-equivariant if
\[\varphi(t;\{gq^\nu\}_{\nu=0}^r)=g \varphi(t;\{q^\nu\}_{\nu=0}^r).\]
Then, a $G$-invariant discrete Hamiltonian can be obtained if we use $G$-equivariant interpolatory functions.
\begin{lemma}\label{gvi:lemma:invariant_Ld}
Let $G$ be a Lie group acting on $Q$, such that gQ=Q, for all $g\in G$. If the interpolatory function $\varphi(t;\{gq^\nu\}_{\nu=0}^r)$ is
$G$-equivariant, and the generalized Lagrangian $R:TQ\oplus T^*Q\rightarrow\mathbb{R}$,
\[R(q,\dot{q},p)=p\dot{q}-H(q,p),\]
is $G$-invariant, then the generalized discrete Lagrangian $R_d:Q\times
T^*Q\rightarrow \mathbb{R}$, given by
\[R_d(q_0, q_1, p_1)=p_1 q_1 -H_d^+(q_0, p_1),\]
where,
\[H_d^+(q_0, p_1)=\ext_{\substack{q^\nu\in Q, p^i\in Q^*\\q^0=q_0}} \left[p_1\cdot \varphi\left(h;\{q^\nu\}_{\nu=0}^r\right)-\sum_{i=1}^s b_i R\left(T\varphi\left(c_i h;\{q^\nu\}_{\nu=0}^r\right),p^i\right)\right],\]
is $G$-invariant.
\end{lemma}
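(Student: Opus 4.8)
The plan is to establish the finite $G$-invariance of $R_d$ directly, by transporting the extremization problem \eqref{eq:G_inv_H_d} through the group action. Since $R_d(q_0,q_1,p_1)=p_1 q_1-H_d^+(q_0,p_1)$, it suffices to show that, for each fixed $g\in G$, both the pairing term $p_1 q_1$ and the discrete Hamiltonian $H_d^+(q_0,p_1)$ are invariant under the cotangent lifted action $\Phi^{T^*Q}_g$. Writing $\Phi^{T^*Q}_g(q,p)=(\Phi_g(q),g\cdot p)$ as in \eqref{defcotgroup}, the contraction of a momentum with its underlying position is preserved by the cotangent lift: in coordinates this is the same chain-rule cancellation that underlies the invariance of $\theta=p_i\,dq^i$ in Proposition \ref{prop1}, giving $(g\cdot p)\cdot\Phi_g(q)=p\cdot q$. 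In particular $(g\cdot p_1)\cdot\Phi_g(q_1)=p_1 q_1$, and likewise $(g\cdot p_1)\cdot\Phi_g\big(\varphi(h;\{q^\nu\})\big)=p_1\cdot\varphi(h;\{q^\nu\})$, the latter to be used inside $H_d^+$.

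For $H_d^+$, I would start from
\[
H_d^+(\Phi_g(q_0),g\cdot p_1)=\ext_{\substack{q^\nu\in Q,\ p^i\in Q^*\\ q^0=\Phi_g(q_0)}}\left[(g\cdot p_1)\cdot\varphi\big(h;\{q^\nu\}\big)-\sum_{i=1}^s b_i R\big(T\varphi(c_i h;\{q^\nu\}),p^i\big)\right]
\]
and change variables $q^\nu=\Phi_g(\tilde q^\nu)$, $p^i=g\cdot\tilde p^i$. The hypothesis $gQ=Q$ makes this a bijection of the admissible set, and since $\Phi_g$ is injective the constraint $q^0=\Phi_g(q_0)$ becomes $\tilde q^0=q_0$. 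By $G$-equivariance of $\varphi$, $\varphi(t;\{\Phi_g(\tilde q^\nu)\})=\Phi_g(\varphi(t;\{\tilde q^\nu\}))$; differentiating in $t$ shows the tangent prolongation transforms by the tangent-lifted action, $T\varphi(c_i h;\{\Phi_g(\tilde q^\nu)\})=T\Phi_g\cdot T\varphi(c_i h;\{\tilde q^\nu\})$. Combined with the $G$-invariance of $R$ on $TQ\oplus T^*Q$ under the tangent- and cotangent-lifted actions, each summand $R(T\varphi(c_i h;\{\Phi_g(\tilde q^\nu)\}),g\cdot\tilde p^i)$ collapses to $R(T\varphi(c_i h;\{\tilde q^\nu\}),\tilde p^i)$, and by the pairing identity above the first term becomes $p_1\cdot\varphi(h;\{\tilde q^\nu\})$. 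Hence the quantity being extremized is exactly that of \eqref{eq:G_inv_H_d} for the arguments $(q_0,p_1)$. Since a diffeomorphism of the admissible set carries critical points to critical points and preserves critical values, the two extrema agree, i.e. $H_d^+(\Phi_g(q_0),g\cdot p_1)=H_d^+(q_0,p_1)$; together with the invariance of $p_1 q_1$ this yields $R_d(\Phi_g(q_0),\Phi_g(q_1),g\cdot p_1)=R_d(q_0,q_1,p_1)$, the claimed $G$-invariance.

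I expect the last step — asserting that $\ext$ is genuinely transported by the change of variables — to be the only real subtlety: it uses that the critical point defining $H_d^+$ is nondegenerate, so that $\ext$ selects a well-defined value (the role of a regularity assumption of the type $\left|\partial^2 H_d^+/\partial q\,\partial p\right|\neq0$), and that $(\tilde q^\nu,\tilde p^i)\mapsto(\Phi_g(\tilde q^\nu),g\cdot\tilde p^i)$ is a diffeomorphism of the constraint set respecting the boundary condition, both of which hold once $\Phi_g$ is a diffeomorphism of $Q$ with $gQ=Q$. The remaining ingredients — the cotangent-lift chain-rule identity, the equivariance of $\varphi$ and hence of $T\varphi$, and the invariance of $R$ — are routine bookkeeping. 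If only the infinitesimal version were needed, as in the discrete Noether theorem (Theorem \ref{discrete_noether}), one could instead differentiate $H_d^+(\Phi_{\exp(\epsilon\xi)}(q_0),\exp(\epsilon\xi)\cdot p_1)=H_d^+(q_0,p_1)$ at $\epsilon=0$.
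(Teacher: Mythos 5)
Your overall strategy in the second paragraph --- transporting the extremization through the group action using the equivariance of $\varphi$, the invariance of $R$, the bijectivity of $\Phi_g$ on the admissible set, and the fact that a diffeomorphism of the constraint set carries critical points to critical points --- is sound and matches the paper's. The gap is in the first step: the identity $(g\cdot p)\cdot\Phi_g(q)=p\cdot q$, which you use both to declare $p_1q_1$ invariant and to convert the boundary term $(g\cdot p_1)\cdot\Phi_g\bigl(\varphi(h;\{\tilde q^\nu\})\bigr)$ back into $p_1\cdot\varphi(h;\{\tilde q^\nu\})$, is false for general actions. Proposition~\ref{prop1} asserts invariance of the one-form, $P_i\,dQ^i=p_j\,dq^j$, which is a statement about the differentials $dQ^i=\frac{\partial\Phi_g^i}{\partial q^j}\,dq^j$; the contraction $P_iQ^i$ involves $\Phi_g^i(q)$ itself, and $\frac{\partial\Phi_g^i}{\partial q^j}q^j=\Phi_g^i(q)$ only when $\Phi_g$ is linear. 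The simplest counterexample is a translation $\Phi_a(q)=q+a$, whose cotangent lift is $(q,p)\mapsto(q+a,p)$, so that $(a\cdot p)\cdot\Phi_a(q)=p\cdot q+p\cdot a\neq p\cdot q$. In that case neither piece of your decomposition is invariant: one checks directly that $H_d^+(q_0+a,p_1)=H_d^+(q_0,p_1)+p_1\cdot a$ and $(a\cdot p_1)\cdot(q_1+a)=p_1q_1+p_1\cdot a$, and only the difference $R_d$ is invariant. Since translations are exactly the case of linear momentum, the splitting of $R_d$ into two separately invariant terms cannot be the right organizing principle.

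The paper's proof avoids the pairing identity entirely: since $q_1=D_2H_d^+(q_0,p_1)=\varphi(h;\{q^\nu_*\})$ at the extremizer, the term $p_1q_1$ cancels against $p_1\cdot\varphi(h;\{q^\nu\})$ inside the extremization, leaving $R_d(q_0,q_1,p_1)=\ext\left[\sum_{i=1}^s b_i R\left(T\varphi(c_i h;\{q^\nu\}),p^i\right)\right]$, after which the change of variables $\tilde q^\nu=gq^\nu$, $\tilde p^i=gp^i$ proceeds exactly as in your argument with no boundary terms left to track. Your proof does go through verbatim when the action is linear (e.g.\ rotations), but to cover the general case you should either perform this cancellation first, or else keep the non-invariant boundary terms and verify that the error incurred by $H_d^+$ is precisely $(g\cdot p_1)\cdot(g\cdot q_1)-p_1q_1$, so that it cancels against the error in the pairing term.
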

\begin{proof}
To streamline the notation, we denote the cotangent lifted action of $G$ on $Q$ by $\Phi_g^{T^*Q}(q,p)=(gq,gp)$. First, we note that
\begin{align*}
R_d(q_0,q_1,p_1)&=p_1q_1-\ext_{\substack{q^\nu\in Q, p^i\in Q^*\\q^0=q_0}} \left[p_1\cdot \varphi\left(h;\{q^\nu\}_{\nu=0}^r\right)-\sum_{i=1}^s b_i R\left(T\varphi\left(c_i h;\{q^\nu\}_{\nu=0}^r\right),p^i\right)\right]\\
&=\ext_{\substack{q^\nu\in Q, p^i\in Q^*\\q^0=q_0}}\left[\sum_{i=1}^s b_i R\left(T\varphi\left(c_i h;\{q^\nu\}_{\nu=0}^r\right),p^i\right)\right].
\end{align*}
Then,
\begin{align*}
R_d(g q_0, g q_1, g p_1) &= \ext_{\substack{\tilde{q}^\nu\in Q, \tilde{p}^i\in Q^*\\ \tilde{q}^0=gq_0}}\left[\sum_{i=1}^s b_i R\left(T\varphi\left(c_i h;\{\tilde{q}^\nu\}_{\nu=0}^r\right),\tilde{p}^i\right)\right]\\
&= \ext_{\substack{q^\nu\in g^{-1}Q, p^i\in g^{-1}Q^*\\ gq^0=gq_0}} \left[\sum_{i=1}^s b_i R\left(T\varphi\left(c_i h;\{gq^\nu\}_{\nu=0}^r\right),gp^i\right)\right]\\
&=\ext_{\substack{q^\nu\in Q, p^i\in Q^*\\ q^0=q_0}} \left[\sum_{i=1}^s b_i R\left(TL_g\cdot T\varphi\left(c_i h;\{q^\nu\}_{\nu=0}^r\right),gp^i\right)\right]\\
&=\ext_{\substack{q^\nu\in Q, p^i\in Q^*\\ q^0=q_0}}\left[\sum_{i=1}^s b_i R\left(T\varphi\left(c_i h;\{q^\nu\}_{\nu=0}^r\right),p^i\right)\right]\\
&=R_d(q_0, q_1, p_1),
\end{align*}
where we used the identification $\tilde{q}^\nu=g q^\nu$ in the second equality, the $G$-equivariance of the interpolatory function and the property that $gQ=Q$
in the third equality, and the $G$-invariance of the generalized Lagrangian in the fourth equality.
\end{proof}
In view of Theorem~\ref{discrete_noether}, and the above lemma, if we use a $G$-equivariant interpolatory function to construct a discrete Hamiltonian as given in \eqref{eq:G_inv_H_d}, then the discrete flow given by the discrete right Hamilton's equations will preserve the momentum map $J:T^*Q\rightarrow \mathfrak{g}^*$.\\

\paragraph{\bf Natural Charts and $G$-equivariant interpolants.}
Following the construction in \cite{MaPeSh1999}, we use the group
exponential map at the identity, $\exp_e:\mathfrak{g}\rightarrow
G$, to construct a $G$-equivariant interpolatory function, and a
higher-order discrete Lagrangian. As shown in
Lemma~\ref{gvi:lemma:invariant_Ld}, this construction yields a
$G$-invariant generalized discrete Lagrangian if the generalized Lagrangian itself is
$G$-invariant.

In a finite-dimensional Lie group $G$, $\exp_e$ is a local
diffeomorphism, and thus there is an open neighborhood $U\subset
G$ of $e$ such that $\exp_e^{-1}:U\rightarrow
\mathfrak{u}\subset\mathfrak{g}$. When the group acts on the left,
we obtain a chart $\psi_g:L_g U \rightarrow \mathfrak{u}$ at $g\in
G$ by
\[\psi_g=\exp_e^{-1}\circ L_{g^{-1}}.\]

\begin{lemma}
The interpolatory function given by
\[\varphi(g^\nu;\tau h)=\psi_{g^0}^{-1}
\Big(\sum\nolimits_{\nu=0}^s
\psi_{g^0}(g^\nu)\tilde{l}_{\nu,s}(\tau)\Big),\] is
$G$-equivariant.
\end{lemma}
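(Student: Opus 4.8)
The plan is to exploit the near-left-invariance of the natural charts $\psi_g=\exp_e^{-1}\circ L_{g^{-1}}$. Fix an element $a\in G$; we must show that $\varphi(\{a g^\nu\}_{\nu=0}^s;\tau h)=L_a\,\varphi(\{g^\nu\}_{\nu=0}^s;\tau h)$, where on the left the interpolation is built with the chart centred at the transformed base point $a g^0$. The whole argument is a short bookkeeping computation, organized around one composition identity.

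First I would record the identity $\psi_{a g^0}=\psi_{g^0}\circ L_{a^{-1}}$. This is immediate from the definition, since $L_{(a g^0)^{-1}}=L_{(g^0)^{-1}}\circ L_{a^{-1}}$, and it also settles the domains: the chart $\psi_{a g^0}$ is defined on $L_{a g^0}U=a\cdot L_{g^0}U$, which contains $a g^\nu$ precisely when $L_{g^0}U$ contains $g^\nu$, so the construction applies to the transformed data exactly when it applies to the original data, and the equivariance assertion is not vacuous.

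Next I would observe that the Lie-algebra coefficients entering the interpolation are unchanged: for every $\nu$,
\[\psi_{a g^0}(a g^\nu)=\psi_{g^0}\big(L_{a^{-1}}(a g^\nu)\big)=\psi_{g^0}(g^\nu).\]
Since the weight functions $\tilde{l}_{\nu,s}(\tau)$ carry no dependence on the group elements, the curve $\zeta(\tau):=\sum_{\nu=0}^s\psi_{g^0}(g^\nu)\,\tilde{l}_{\nu,s}(\tau)\in\mathfrak{u}$ is literally the same whether one assembles it from $\{g^\nu\}$ or from $\{a g^\nu\}$. Finally, inverting the composition identity gives $\psi_{a g^0}^{-1}=L_a\circ\psi_{g^0}^{-1}$, whence
\[\varphi(\{a g^\nu\};\tau h)=\psi_{a g^0}^{-1}\big(\zeta(\tau)\big)=L_a\big(\psi_{g^0}^{-1}(\zeta(\tau))\big)=L_a\,\varphi(\{g^\nu\};\tau h)=a\cdot\varphi(\{g^\nu\};\tau h),\]
which is exactly $G$-equivariance.

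There is no genuinely hard step here; the content is the identity $\psi_{a g^0}=\psi_{g^0}\circ L_{a^{-1}}$ together with the group-independence of the interpolation weights. The only points needing care are the order of composition when inverting — so that $L_a$, and not $L_{a^{-1}}$, ends up on the outside — and the compatibility of the chart domains noted above. As a consistency check one can verify the interpolation property $\varphi(d_\eta h;\{g^\nu\})=g^\eta$ from $\tilde{l}_{\nu,s}(d_\eta)=\delta_{\nu\eta}$, which transforms correctly under the same computation.
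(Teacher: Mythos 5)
Your proof is correct and follows essentially the same route as the paper's: both arguments reduce to the observation that the Lie-algebra coefficients $\psi_{ag^0}(ag^\nu)=\psi_{g^0}(g^\nu)$ are unchanged and that $\psi_{ag^0}^{-1}=L_a\circ\psi_{g^0}^{-1}$, the only difference being that you isolate the composition identity $\psi_{ag^0}=\psi_{g^0}\circ L_{a^{-1}}$ up front while the paper unpacks the definitions inline. Your added remark on chart domains is a minor but welcome refinement the paper omits.
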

\begin{proof}
\begin{align*}
\varphi(gg^\nu;\tau h) &= \psi_{(gg^0)}^{-1}
\Big(\sum\nolimits_{\nu=0}^s
\psi_{gg^0}(gg^\nu)\tilde{l}_{\nu,s}(\tau)\Big)\\
&= L_{gg^0} \exp_e \Big(\sum\nolimits_{\nu=0}^s
\exp_e^{-1}((gg^0)^{-1}(gg^\nu))\tilde{l}_{\nu,s}(\tau)\Big)\\
&= L_g L_{g^0} \exp_e \Big(\sum\nolimits_{\nu=0}^s
\exp_e^{-1}((g^0)^{-1}g^{-1}gg^\nu)\tilde{l}_{\nu,s}(\tau)\Big)\\
&= L_g \psi_{g^0}^{-1} \Big(\sum\nolimits_{\nu=0}^s
\exp_e^{-1}\circ L_{(g^0)^{-1}}(g^\nu)\tilde{l}_{\nu,s}(\tau)\Big)\\
&= L_g \psi_{g^0}^{-1} \Big(\sum\nolimits_{\nu=0}^s
\psi_{g^0}(g^\nu)\tilde{l}_{\nu,s}(\tau)\Big)\\
&= L_g \varphi(g^\nu;\tau h).\qedhere
\end{align*}
\end{proof}
This $G$-equivariant interpolatory function based on natural charts allows one to construct discrete Lie group Hamiltonian variational integrators that preserve the momentum map.

\section{Conclusions and Future Directions}
In this paper, we provided a variational characterization of the Type II generating function that generates the exact flow of Hamilton's equations, and show how this is a Type II analogue of Jacobi's solution of the Hamilton--Jacobi equation. This corresponds to the exact discrete Hamiltonian for discrete Hamiltonian mechanics, and Galerkin approximations of this lead to computable discrete Hamiltonians. In addition, we introduced a discrete Type II Hamilton--Jacobi equation, which can be viewed as a composition theorem for discrete Hamiltonians.

We introduced generalized Galerkin variational integrators from both the Hamiltonian and Lagrangian approach, and when the Hamiltonian is hyperregular, these two approach are equivalent. Furthermore, we demonstrated how these methods can be implemented as symplectic partitioned Runge--Kutta methods, and derived several examples using this framework. Finally we characterized the invariance properties of a discrete Hamiltonian which ensure that the discrete Hamiltonian flow preserves the momentum map.  

We are interested in the following topics for future work:
\begin{itemize}
\item {\it Lie--Poisson Reduction and Connections to the Hamilton--Pontryagin principle.}
Since we provided a method for constructing discrete Hamiltonians that yields a numerical method that is momentum preserving, it is natural to consider discrete analogues of Lie-Poisson reduction. In particular, the constrained variational formulation of continuous Lie-Poisson reduction~\cite{CeMaPeRa2003} appears to be related to the Hamilton--Pontryagin variational principle~\cite{YoMa2006a}. It would be interesting to develop  discrete Lie--Poisson reduction~\cite{MaPeSh1999} from the Hamiltonian perspective, in the context of the discrete Hamilton--Pontryagin principle~\cite{LeOh2008,St2009}.
\item {\it Extensions to Multisymplectic Hamiltonian PDEs.} Multisymplectic integrators have been developed in the setting of Lagrangian variational integrators~\cite{LeMaOrWe2003}, and Hamiltonian multisymplectic integrators~\cite{BrRe2001}. In the paper~\cite{MaPaSh1998}, the Lagrangian formulation of multisymplectic field theory is related to Hamiltonian multisymplectic field theory~\cite{Br1997}. It would be interesting to construct Hamiltonian variational integrators for multisymplectic PDEs by generalizing the variational characterization of discrete Hamiltonian mechanics, and the generalized Galerkin construction for computable discrete Hamiltonians, to the setting of Hamiltonian multisymplectic field theories.
\end{itemize}

\section*{Acknowledgments}
The research of ML was supported by NSF grants DMS-0726263, and CAREER Award DMS-0747659. The research of JZ was conducted in the mathematics departments at Purdue University and the University of California, San Diego, supported by a fellowship from the Academy of Mathematics and System Science, Chinese Academy of Sciences.
We would like to thank Jerrold Marsden, Tomoki Ohsawa, Joris Vankerschaver, and Matthew West for helpful discussions and comments.

\bibliography{dhvi}
\bibliographystyle{siam}
 \end{document}